\numberwithin{equation}{section}
\newtheorem{theorem}{Theorem}[section]
\newtheorem{lemma}{Lemma}[section]
\newtheorem{corollary}{Corollary}[section]
\newtheorem{proposition}{Proposition}[section]
\theoremstyle{definition}
\newtheorem{definition}{Definition}[section]
\newtheorem{remark}{Remark}[section]
\newtheorem{example}{Example}[section]
\newtheorem{question}{Question}[section]
\newtheorem{conjecture}{Conjecture}[section]
\theoremstyle{remark}
\date{}
\begin{document}

\title{Complete complex Finsler metrics and uniform equivalence of the Kobayashi metric}
\author{Jun Nie (jniemath@126.com)\\
Department of Mathematics, Nanchang University\\ Nanchang,  330031, China\\
}

\date{}
\maketitle

\begin{abstract}
  In this paper, first of all, according to Lu's and Zhang's works about the curvature of the Bergman metric on a bounded domain and the properties of the squeezing functions, we observe that Bergman curvatures of the Bergman metric on a bounded strictly pseudoconvex domain with $C^2$-boundary or bounded convex domain are bounded. Applying to the Schwarz lemma from a complete K\"ahler manifold into a complex Finsler manifold, we get that a bounded strictly pseudoconvex domain with $C^2$-boundary or bounded convex domain admits complete strongly pseudoconvex complex Finsler metrics such that their holomorphic sectional curvature is bounded from above by a negative constant. Finally, by the Schwarz lemma from a complete K\"ahler manifold into a complex Finsler manifold, we prove the uniform equivalences of the Kobayashi metric and Carath\'eodory metric on a bounded strongly convex domain with smooth boundary.
\end{abstract}

\textbf{Keywords:}  Complex Finsler metrics; Bounded domains; Squeezing functions; Holomorphic sectional curvature.

\textbf{MSC(2010):}  53C56, 53C60, 32F45.

\section{Introduction}
\noindent

Bounded domains are elementary objects of study in complex analysis. From the Riemann mapping theorem, we know that simply connected open subset in $\mathbb{C}$ which is not the whole complex plane is biholomorphic to the unit disk. In several complex variables, the situation is more complicated and mysterious. To study complex and geometric structures of bounded domains, one may consider that holomorphic mappings from bounded domains to some standard domains such as balls and vice versa. Deng et al. \cite{DGZ1,DGZ2} introduced the notion of squeezing function (see Definition \ref{D-3.1} and \ref{D-3.2}) to study geometric and analytic properties of bounded domains. 

By definition of squeezing function, it is clear that the squeezing function is invariant under biholomorphic transformations. Squeezing function is always positive and bounded above by 1. Now a natural question arises: for arbitrary bounded domains in $\mathbb{C}^n$, how much can we say about the lower bound of the squeezing function? It turns out that the above mentioned squeezing function admits uniform lower and upper bounds on some general classes of bounded domains-\textit{Holomorphic homogeneous regular manifolds} (HHR) or the \textit{Uniformly squeezing domain} (USq), which introduced by Liu et al. \cite{LSY} and by Yeung \cite{YSK} independently.

In 2012, Deng et al. \cite{DGZ1} proved the squeezing function on any bounded domain is continuous. Soon after, they study boundary behaviors of the squeezing functions on some bounded domains. And they proved that the squeezing function of any strictly pseudoconvex domain tends to $1$ near the boundary. What's more, for a bounded strictly pseudoconvex domain with $C^2$-boundary in $\mathbb{C}^n$, it is HHR. In 2016, Kim and Zhang \cite{KZ} proved that the bounded convex domains admit the uniform squeezing property. Furthermore, all bounded convex domains in $\mathbb{C}^n(n \geq 1)$ are HHR. By the above results, we can know that the squeezing functions of bounded strictly pseudoconvex domains with $C^2$-boundary or bounded convex domains in $\mathbb{C}^n$  are bounded from below by a positive constant.

The terminology, HHR or USq, has been introduced in order to the study of completeness and equivalence of the classical invariant metrics, including the Teichm\"uller metric, the Bergman metric, the complete K\"ahler-Einstein metric of negative scalar curvature, the Carath\'eodory metric and the Kobayashi metric. The completeness of these metrics on various types of complex manifolds and the boundness of the holomorphic sectional curvature of the Bergman metric has been an interesting topic ever since 1926 (when the Carath\'eodory metric was first introduced \cite{Caratheodory}). It is known that the Kobayashi metric is the maximum metric among metric satisfying the decreasing property, however, the Carath\'eodory metric is the minimum. It is known that on a bounded, smooth, strictly, pseudoconvex domain in $\mathbb{C}^n$, all classical invariant metrics are uniformly equivalent to each other (see, for example, \cite{BFG,lempert,lempert1,WHS,LSY,DGZ2,YSK} and the references therein).

It has been well-known that for any bounded domain in $\mathbb{C}^n$, the holomorphic sectional curvature of the Bergman metric is always less than $2$ (see \cite{Kobayashi0}). In 2015, Lu \cite{LQK} obtained some lower bounds of the holomorphic sectional curvature and Ricci curvature of the Bergman metric on a bounded domain of $\mathbb{C}^n$. The methods rely on constructing proper minimal functions. We note that the lower bounds Lu obtained tends to $-\infty$ at the point $z$ goes to the boundary $\partial D$. Based on Lu's work, Zhang \cite{ZLY} studied the scalar curvature of the Bergman metric on a bounded domain of $\mathbb{C}^n$. They gave the lower and upper bounds estimates for the Bergman curvatures in terms of the squeezing function originally introduced by Deng et al. \cite{DGZ1}. Combining the above results with Lu's and Zhang's work, the holomorphic sectional curvature of the Bergman metric on a bounded strictly pseudoconvex domain with $C^2$-boundary or bounded convex domain is bounded.

In complex Finsler geometry, there are numerous strongly pseudoconvex complex Finsler metrics in the sense of Abate and Patrizio. In invariant metrics, the Kobayashi metric and the Carath\'eodory metric are complex Finsler metrics. Due to Lempert's result (see Theorem \ref{Lempert} and Remark \ref{R-5.1}), the Kobayashi metric and the Carath\'eodory metric on bounded strongly convex with smooth  boundary are complex Finsler metrics in sense of Abate and Patrizio \cite{abate}. But they are not explicit. Zhong \cite{ZCP} found there are  non-Hermitian quadratic strongly pseudoconvex complex Finsler metrics on a domain of $\mathbb{C}^n$ in the sense of smooth  category. It is very natural to ask the question: $\textit{are these metrics complete ?}$ The following theorem answers this question.
\begin{theorem}(cf. Theorem \ref{MT})\label{T-1.1}
For a suitable choice of positive constant $C$, suppose that $\mathcal{D}$ is a bounded strictly pseudoconvex domain with $C^2$-boundary or bounded convex domain in $\mathbb{C}^n$. Suppose that $\mathcal{D}$ admits a strongly pseudoconvex complex Finsler metric $G:T^{1,0}\mathcal{D}\rightarrow [0,+\infty)$ such that its holomorphic sectional curvature is bounded from above by a negative constant $-K$ and $g_B$ is the Bergman metric on $\mathcal{D}$. Then $H=CG+g_B$ and the Bergman metric $g_B$ are equivalent. What's more, $H=CG+g_B$ is complete.
\end{theorem}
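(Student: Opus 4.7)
The plan is to reduce the whole statement to one application of the Schwarz lemma from a complete K\"ahler manifold into a complex Finsler manifold, applied to the identity map $\mathrm{id}\colon(\mathcal{D},g_B)\to(\mathcal{D},G)$. Since $G\ge 0$, the lower inequality $H=CG+g_B\ge g_B$ is automatic, so both the equivalence and the completeness claims will follow once I produce a uniform reverse estimate $G(v)\le C'g_B(v)$ on tangent vectors; indeed this gives $g_B\le H\le(1+CC')g_B$, and any positive $C$ can then be regarded as "suitable".

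To obtain the reverse estimate I would verify the Schwarz lemma hypotheses for the identity map. On the source side, the Bergman metric $g_B$ is complete on any bounded strictly pseudoconvex domain with $C^2$-boundary and on any bounded convex domain, by classical results. By Theorem~\ref{Holomorhic} already recorded in the introduction, the holomorphic sectional curvature of $g_B$ is pinched between constants depending only on $n$; invoking the K\"ahler curvature symmetry emphasized in the abstract---that bounded holomorphic sectional curvature on a K\"ahler manifold forces bounded sectional curvature---this upgrades to two-sided bounds on the sectional curvature of $g_B$. On the target side, the holomorphic sectional curvature of $G$ is bounded above by $-K<0$ by hypothesis. The Schwarz lemma then outputs $G(v)\le C'g_B(v)$ for every $v\in T^{1,0}\mathcal{D}$, with $C'$ depending only on $K$ and the curvature data of $g_B$.

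Putting the two inequalities together yields the uniform equivalence $g_B\le H\le(1+CC')g_B$. For completeness of $H$ I would argue that $H\ge g_B$ implies $d_H\ge d_{g_B}$, so any $d_H$-Cauchy sequence is $d_{g_B}$-Cauchy and therefore converges in $(\mathcal{D},g_B)$ to an interior point $p\in\mathcal{D}$, since $g_B$ is complete on $\mathcal{D}$; the upper bound $d_H\le(1+CC')d_{g_B}$ then forces the same sequence to converge to $p$ in $(\mathcal{D},H)$ as well, giving the completeness of $H$.

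The main obstacle, as I see it, is checking that the Schwarz lemma truly applies in this mixed Hermitian-source to Finsler-target setting with the map taken to be the identity, and that the two curvature inputs for $g_B$ are genuinely in the form the lemma requires. In particular, the step that passes from bounded holomorphic sectional curvature of $g_B$ (as in Theorem~\ref{Holomorhic}) to bounded sectional curvature must be executed carefully using the K\"ahler identities; once this is under control, the remainder is a short bookkeeping argument with the two inequalities.
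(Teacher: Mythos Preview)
Your argument is correct, but it takes a genuinely different route from the paper's. You apply the Schwarz lemma (Theorem~\ref{SL}) to the identity map $\mathrm{id}\colon(\mathcal{D},g_B)\to(\mathcal{D},G)$, obtaining $G\le C'g_B$ directly and then sandwiching $g_B\le H\le(1+CC')g_B$; in your version any $C>0$ works. The paper instead first uses Proposition~\ref{P-2.1} to argue that the holomorphic sectional curvature of the \emph{sum} $H=CG+g_B$ satisfies $K_H(v)\le\frac{1}{C}K_G(v)+Sec_{\mathcal{D}}(z,v)\le -\frac{K}{C}+B$, which is negative provided $0<C<K/B$, and only then applies the Schwarz lemma to $\mathrm{id}\colon(\mathcal{D},g_B)\to(\mathcal{D},H)$. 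Your approach is shorter and sidesteps the curvature computation for $H$ entirely; the paper's approach, on the other hand, produces the additional conclusion that $H$ itself has holomorphic sectional curvature bounded above by a negative constant, which is precisely what is needed downstream for Corollary~\ref{Coro} (via Theorem~\ref{completeKobayashi}) and is the reason the statement carries the qualifier ``for a suitable choice of positive constant $C$''. So both arguments establish the theorem as stated, but the paper's detour through $K_H<0$ is not incidental: it is setting up the next result.
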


It is well-known that any bounded domains in $\mathbb{C}^n$ are Kobayashi-hyperbolic (see Definition \ref{D-2.3}). What's more, if a domain $\mathcal{D}$ in $\mathbb{C}^n$ is HHR, $\mathcal{D}$ is a complete Kobayashi-hyperbolic. In \cite{WHS}, Wu raised a question as follows.
\begin{question}\label{Q-1.1}\cite[p. 665, Question 1]{WHS}
Does a (complete) Kobayashi-hyperbolic manifold $M$ admit a (complete) Hermitian metric of strongly negative holomorphic sectional curvature, in sense that there exists a negative constant $c$ such that the holomorphic sectional curvature $H$ satisfies $H <c$ ?
\end{question}
\begin{remark}
Cheung \cite{CCK} showed  the result as follows. If $M$ is a compact Kobayashi-hyperbolic manifold such that there exists a surjective holomorphic map $\pi: M \rightarrow Y$ of everywhere maximal rank, such that $Y$ carries a smooth  Hermitian metric of strongly negative holomorphic sectional curvature, and such each fiber $\pi^{-1}(y) (y \in Y)$ also carries a smooth  Hermitian metric of strongly negative holomorphic sectional curvature, then $M$ also admits a smooth  Hermitian metric of strongly negative holomorphic sectional curvature. To and Yeung \cite{TY} showed that the base manifold of an effectively parametrized holomorphic family of compact canonically polarized complex manifolds admits a smooth  invariant complex Finsler metric whose holomorphic sectional curvature is bounded above by a negative constant.
\end{remark}
Recently, Nie and Zhong \cite{NZ1} proved that a bounded domain in $\mathbb{C}^n$ admits a non-Hermitian quadratic strongly pseudoconvex complex Finsler metric such that its holomorphic sectional curvature is bounded from above by a negative constant. Actually, these complex Finsler metrics are strongly convex. Combining Theorem \ref{T-1.1} and the above result, we obtain the following theorem. Our result partly answers Question \ref{Q-1.1}.
\begin{theorem} (cf. Corollary \ref{Coro})\label{T-1.3}
Suppose that $\mathcal{D}$ is a bounded strictly pseudoconvex domain with $C^2$-boundary or bounded convex domain in $\mathbb{C}^n$. Then $\mathcal{D}$ admits a complete strongly pseudoconvex complex Finsler metric $H:T^{1,0}\mathcal{D}\rightarrow [0,+\infty)$ such that its holomorphic sectional curvature is bounded from above by a negative constant. What's more, $\mathcal{D}$ is a complete Kobayashi-hyperbolic.
\end{theorem}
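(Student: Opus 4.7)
The plan is to synthesize the three results already established in the paper: the Nie--Zhong existence theorem of a Finsler metric with negatively bounded holomorphic sectional curvature, the completeness result of Theorem \ref{T-1.1}, and the uniform bound on the holomorphic sectional curvature of the Bergman metric contained in the first theorem of the paper.

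First I would apply the Nie--Zhong construction \cite{NZ1} to the domain $\mathcal{D}$ to produce a strongly pseudoconvex (in fact strongly convex) complex Finsler metric $G : T^{1,0}\mathcal{D} \to [0,+\infty)$ whose holomorphic sectional curvature is uniformly bounded above by a negative constant $-K$. Next, letting $g_B$ denote the Bergman metric on $\mathcal{D}$, I would invoke Theorem \ref{T-1.1}, which supplies a positive constant $C$ such that the sum $H := CG + g_B$ is a complete strongly pseudoconvex complex Finsler metric equivalent to $g_B$. Strong pseudoconvexity of $H$ is inherited from strong pseudoconvexity of $G$ and the positive-definiteness of $g_B$.

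The main technical step, and the one I expect to be the principal obstacle, is verifying that the holomorphic sectional curvature of $H$ remains bounded above by a negative constant. Here I would use the first theorem of the paper, which provides a uniform upper bound, say $M$, on the holomorphic sectional curvature of $g_B$. Because holomorphic sectional curvature is a nonlinear functional of the metric, the curvature of $H$ does not split as a sum of the curvatures of $CG$ and $g_B$; however, the homogeneity of the curvature under dilation implies that as $C$ grows the contribution of the bounded-curvature term $g_B$ is suppressed, so for $C$ sufficiently large the dominant term $CG$ forces the holomorphic sectional curvature of $H$ to be bounded above by some negative constant $-K'$. A careful Finsler-geometric computation, or alternatively a Wu-type comparison inequality for holomorphic sectional curvatures of sums, will be needed to make this rigorous with the same $C$ that already appears in Theorem \ref{T-1.1}; if necessary, $C$ is enlarged to fulfill both requirements simultaneously, which does not affect the equivalence with $g_B$.

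Finally, the complete Kobayashi-hyperbolicity of $\mathcal{D}$ follows from the Schwarz lemma from a complete K\"ahler manifold into a complex Finsler manifold that is mentioned in the introduction: applied to holomorphic maps $f:\Delta \to \mathcal{D}$ from the Poincar\'e disk into $(\mathcal{D}, H)$, the bound $-K'$ on the holomorphic sectional curvature of $H$ yields a uniform comparison of the form $\sqrt{K'/4}\,\sqrt{H} \leq K_{\mathcal{D}}$, where $K_{\mathcal{D}}$ is the Kobayashi infinitesimal metric. Since $H$ has already been shown to be complete on $\mathcal{D}$, this domination forces $K_{\mathcal{D}}$ to be complete as well, which is exactly the statement that $\mathcal{D}$ is complete Kobayashi-hyperbolic.
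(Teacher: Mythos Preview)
Your overall architecture matches the paper's exactly: Nie--Zhong to produce $G$, form $H=CG+g_B$, invoke Theorem \ref{T-1.1} (Theorem \ref{MT}) for completeness via equivalence with $g_B$, then the Schwarz-type argument (Theorem \ref{completeKobayashi}) for complete Kobayashi-hyperbolicity. The last step is carried out just as you describe.

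However, the one step you flag as the principal obstacle --- the negative upper bound on $K_H$ --- is handled in the paper in a way opposite to your heuristic, and your intuition there is in fact backwards. Homogeneity gives $K_{CG}=\tfrac{1}{C}K_G$, so letting $C$ \emph{grow} drives the curvature of the dominant term $CG$ toward $0^-$, not toward a usable negative bound; it does not ``suppress'' the influence of $g_B$ on the curvature. The paper instead takes $C$ \emph{small}: using the Abate--Patrizio characterization of holomorphic sectional curvature as a supremum of Gaussian curvatures of pulled-back disk metrics (Proposition \ref{P-2.1}), one obtains in the proof of Theorem \ref{MT} the inequality
\[
K_H(v)\;\le\;\tfrac{1}{C}K_G(v)+Sec_{\mathcal D}(z,v)\;\le\;-\tfrac{K}{C}+B,
\]
and the choice $0<C<K/B$ makes the right-hand side a negative constant. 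Thus the curvature bound is not a separate obstacle requiring a new Wu-type comparison or enlarging $C$; it is already contained in the proof of Theorem \ref{T-1.1}, and the ``suitable constant $C$'' there is precisely the one that makes $-K/C+B<0$. Once you correct the direction of $C$ and observe that (6.4)--(6.5) already deliver the bound, your proposal coincides with the paper's proof.
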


In \cite{NZ1}, Nie and Zhong gave an explicit non-Hermitian quadratic strongly pseudoconvex complex Finsler metric such that its holomorphic sectional curvature is bounded from above by a negative constant. Applying to Theorem \ref{T-1.1}, we construct that the following complex Finsler metric is equivalent to the Bergman metric $g_B$ on $\mathbb{B}^n(\ell)$.
\begin{example}
Let $N:=\mathbb{B}^n(\ell)=\{\|z\|^2<\ell^2\}$ be an open ball in $\mathbb{C}^n(n\geq 2)$ such that $\ell<\frac{1}{b}$ with $b$ an arbitrary positive constant. Let
$$
H(z;v)=C\|v\|^2\exp\Big\{a\|z\|^2+b\frac{|\langle z,v\rangle|^2}{\|v\|^2}\Big\}+g_B(z,v),\quad \forall z\in N,\forall 0\neq v\in T_z^{1,0}N,
$$
where $a,b$ is an arbitrary positive constant,  $C$ is a suitable positive constant, $g_B(z,v)$ is the Bergman metric on $\mathbb{B}^n(\ell)$.
Then $H:T^{1,0}N\rightarrow [0,+\infty)$ is a complete non-Hermitian quadratic strongly pseudoconvex complex Finsler metric with holomorphic sectional curvature bounded from above by a negative constant.
\end{example}

In K\"ahler geometry, Greene and Wu \cite{GW} have posted a remarkable conjecture concerning the uniform equivalences of the Kobayashi-Royden metric. This conjecture is stated below.

\begin{conjecture}\cite[p. 112, Remark (2)]{GW}
Let $(M,h)$ be a simply-connected complete K\"ahler manifold satisfying $-A \leq \text{sectional curvature} \leq -B$ for two positive constants $A$ and $B$. Then, the Kobayashi metric $\mathfrak{K}$ satisfies
$$C^{-1} h(z;v) \leq\mathfrak{K}^2(z;v) \leq C h(z;v), \forall z \in M, \forall v \in T^{1,0}_zM.$$
Here $C>0$ is a constant depending only on $A, B$.
\end{conjecture}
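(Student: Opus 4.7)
The plan is to establish the two sides of the pinching separately: the lower bound $\mathfrak{K}^2(z;v)\geq C^{-1}h(z;v)$ by a Schwarz-lemma argument that uses only the upper curvature bound $-B$, and the upper bound $\mathfrak{K}^2(z;v)\leq C\,h(z;v)$ by an explicit construction of holomorphic disks that requires both curvature bounds. The dichotomy between an ``easy'' Schwarz-lemma side and a ``hard'' disk-construction side is the familiar one for all two-sided Kobayashi estimates.

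For the lower bound I would argue as follows. The hypothesis that the sectional curvature is $\leq -B$ implies that the holomorphic sectional curvature is also $\leq -B$. Hence the Schwarz lemma from a complete K\"ahler manifold into a complex Finsler manifold, which is the central tool of this paper, applies to every holomorphic map $f\colon\Delta\to M$ and furnishes a constant $c_B>0$ depending only on $B$ such that $f^{*}h\leq c_B\,\omega_\Delta$, where $\omega_\Delta$ is the Poincar\'e metric on the unit disk. Evaluating at the origin and taking the infimum over all holomorphic disks $f\colon\Delta\to M$ with $f(0)=z$ and $f'(0)$ parallel to $v$ yields $h(z;v)\leq c_B\,\mathfrak{K}^2(z;v)$, which is the required lower bound with $C\geq c_B$.

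For the upper bound the task is to exhibit, for every $(z,v)\in T^{1,0}M$, a holomorphic map $f\colon\Delta\to M$ with $f(0)=z$ and $h(z;f'(0))\geq c(A,B)\,h(z;v)$. My plan would be to fix $z\in M$, invoke the Cartan-Hadamard theorem to diffeomorphically identify $M$ with $\mathbb{C}^n$ through the exponential map at $z$, and then use the lower bound $-A$ on sectional curvature together with Hessian comparison to produce a bounded, strictly plurisubharmonic exhaustion whose Levi form is uniformly comparable to $h$ on a neighborhood of $z$. H\"ormander's $L^2$-estimates for $\overline{\partial}$ with this weight should then manufacture a holomorphic map $F\colon M\to \mathbb{B}^N$ into some ball that is quantitatively nondegenerate at $z$, in the sense that $F^{*}g_{\mathbb{B}^N}\geq c_1(A,B)\,h$ on $T_z^{1,0}M$. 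Pulling back radial holomorphic disks of $\mathbb{B}^N$ to $M$ via $F$ and invoking the decreasing property of the Kobayashi metric, together with the already understood comparison between the Bergman metric on $\mathbb{B}^N$ and the standard Euclidean metric at the origin, would then provide the upper bound.

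The main obstacle is the quantitative construction of enough holomorphic functions on an \emph{abstract} complete K\"ahler manifold. Throughout the bulk of this paper the underlying manifold is a bounded domain in $\mathbb{C}^n$, so one has free access to the Bergman kernel, the squeezing function of Deng-Guan-Zhang, and the Lu-Zhang-type Bergman-curvature bounds to control geometry uniformly. In the abstract pinched-negative-curvature setting of the Greene-Wu conjecture, neither a global holomorphic embedding nor a Bergman-kernel-based squeezing function is available a priori, so the entire $L^2$-construction, namely the existence of bounded plurisubharmonic weights with uniform Levi-form control, the solvability of $\overline{\partial}$ with uniform $L^\infty$ bounds, and the nondegeneracy of the resulting maps, must be derived from the curvature hypotheses alone. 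This is precisely where the conjecture has historically resisted proof: the Schwarz-lemma side has been settled for decades, whereas the disk-construction side appears to demand function-theoretic input that current techniques on abstract K\"ahler manifolds do not supply.
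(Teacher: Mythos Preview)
The statement you are addressing is recorded in the paper as a \emph{conjecture} and is not proved there; the paper explicitly attributes its resolution to Wu--Yau and then proves only the restricted Theorem~7.1 (equivalently Corollary~7.1), where $M$ is assumed to be a Lempert manifold (in particular, a bounded strongly convex domain with smooth boundary). So there is no ``paper's own proof'' of the full Greene--Wu conjecture to compare against.

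That said, it is worth comparing your outline with what the paper does in its restricted setting. Your lower bound is exactly the paper's argument: Ahlfors--Schwarz applied to disks, using only the upper curvature bound $-B$, gives $\mathfrak{K}^2\geq \tfrac{B}{4}h$. The divergence is in the upper bound. You sketch the Wu--Yau route: Cartan--Hadamard, Hessian comparison, $L^2$ estimates for $\overline{\partial}$ to manufacture holomorphic disks. The paper does something entirely different and much cheaper, made possible by the extra Lempert hypothesis: on a Lempert manifold the Kobayashi metric $\mathfrak{K}^2$ is itself a smooth strongly pseudoconvex complex Finsler metric with holomorphic sectional curvature identically $-4$, so the paper applies its K\"ahler-to-Finsler Schwarz lemma (Theorem~\ref{SL}) to the \emph{identity map} $id:(M,h)\to(M,\mathfrak{K}^2)$ and reads off $\mathfrak{K}^2\leq \tfrac{A}{4}h$ directly. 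No disk construction, no $L^2$ theory, no simple connectivity needed---but the price is the Lempert assumption, without which $\mathfrak{K}$ need not be smooth or have the curvature required for the target side of the Schwarz lemma.

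Your honest acknowledgment that the $L^2$ step is the genuine obstruction in the abstract case is accurate; that is indeed the content of Wu--Yau, and it lies outside the techniques of this paper.
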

 In \cite{WY}, Wu and Yau confirmed this conjecture in the following theorem. 
 \begin{theorem}\cite[p. 104, Theorem 2]{WY}
 Let $(M,h)$ be a complete K\"ahler manifold whose holomorphic sectional curvature $K_h$ satisfies $-A \leq K_h \leq -B$ for some positive constants $A$ and $B$. Then the Kobayashi metric $\mathfrak{K}$ satisfies
 $$C^{-1} h(z;v) \leq\mathfrak{K}^2(z;v) \leq Ch(z;v), \forall z \in M, \forall v \in T^{1,0}_zM.$$
Here $C>0$ is a constant depending only on $A, B$, and $\dim M$.
 \end{theorem}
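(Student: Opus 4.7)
The plan is to prove the two inequalities by very different arguments; the lower bound on $\mathfrak{K}$ comes from the Schwarz lemma (the central theme of this paper), while the upper bound requires producing holomorphic discs into $M$ from local geometric data. For the lower bound $\mathfrak{K}^2(z;v) \geq C^{-1} h(z;v)$, I would apply Yau's Schwarz lemma (or the complex Finsler version quoted earlier in this paper) to each admissible holomorphic $f:\mathbb{D}_r\to M$ with $f(0)=z$ and $df_0(\partial/\partial\zeta)=v$, taking as source the Poincar\'e disk and as target $(M,h)$. Since $K_h\leq -B<0$, the Schwarz lemma yields $h(z,v)\leq c(B)/r^2$. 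Taking the infimum of $1/r$ over all such $f$ in the definition of the Kobayashi metric gives $h(z,v)\leq c(B)\,\mathfrak{K}^2(z,v)$.

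For the upper bound $\mathfrak{K}^2(z;v)\leq Ch(z;v)$ I would proceed in three steps. First, using the K\"ahler curvature symmetry recalled in the abstract, upgrade the holomorphic sectional curvature bound $-A\leq K_h\leq -B$ to a two-sided bound $|\mathrm{sec}_h|\leq \tilde A$ on the full Riemannian sectional curvature. Second, with full sectional curvature controlled, invoke a Jost--Karcher-type construction to obtain, at every $z\in M$, a holomorphic chart $\phi_z: B_h(z,r_0)\to \mathbb{C}^n$ of uniform metric radius $r_0=r_0(\tilde A, n)>0$, sending $z$ to $0$, with $d\phi_z|_z$ unitary and the chart uniformly bi-Lipschitz. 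Third, for each $(z,v)$ with $v\neq 0$, set $w=d\phi_z|_z(v)$, so that $|w|=|v|_h$; then $\zeta\mapsto \phi_z^{-1}(\zeta w)$ is a holomorphic map from a disk of radius comparable to $r_0/|v|_h$ into $M$ realizing the required initial condition, and by the definition of the Kobayashi metric this gives $\mathfrak{K}(z,v)\leq C'|v|_h/r_0$, hence $\mathfrak{K}^2(z,v)\leq Ch(z,v)$.

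The main obstacle is the second step. The Rauch comparison theorem already provides a smooth exponential chart of uniform size under the sectional bound, but that chart is only $C^\infty$ and not holomorphic. Straightening it to a holomorphic chart with quantitative distortion usually requires solving a $\bar\partial$-equation for a small correction with uniform $L^\infty$-estimates, via H\"ormander's $L^2$ estimates combined with interior elliptic regularity and Moser iteration. The completeness of $h$ enters precisely to make $r_0$ and the distortion bounds uniform in $z\in M$, which is what allows a single constant $C$ to appear in the final conclusion; once the uniform holomorphic chart is in hand, the passage to the Kobayashi bound in the third step is essentially tautological.
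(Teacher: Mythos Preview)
This statement is \emph{not} proved in the paper; it is quoted verbatim from Wu--Yau \cite{WY} as motivation. The paper's own contribution is the weaker Theorem~\ref{T-7.1} (and its Corollary~\ref{C-7.1}), which adds the hypothesis that $M$ is a Lempert manifold. So there is no proof in the paper to compare your attempt against directly; what can be compared is your strategy for the general statement versus the paper's strategy for its restricted version.

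Your lower bound argument via the Ahlfors--Schwarz lemma is exactly what the paper does in the proof of Theorem~\ref{T-7.1}. For the upper bound, however, the approaches diverge completely. You propose to manufacture large holomorphic discs via uniform holomorphic charts coming from bounded geometry and a $\bar\partial$-correction; this is in the spirit of Wu--Yau's original argument (which in fact routes through the existence of a complete K\"ahler--Einstein metric rather than a direct chart construction), and the obstacle you flag in Step~2 is a genuine one. The paper, by contrast, never builds any discs for the upper bound: under the Lempert hypothesis the Kobayashi metric $\mathfrak{K}_M^2$ is itself a smooth strongly pseudoconvex complex Finsler metric with holomorphic sectional curvature identically $-4$, so the paper simply applies its Schwarz lemma (Theorem~\ref{SL}) to the identity map $\mathrm{id}:(M,h)\to (M,\mathfrak{K}_M^2)$, obtaining $\mathfrak{K}_M^2\leq (A/4)\,h$ in one line. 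This buys simplicity and a constant depending only on $A,B$ (not on $\dim M$), at the cost of requiring the Lempert structure; your route is aimed at the full generality of the Wu--Yau statement and necessarily pays for it in analytic difficulty.
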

As point out in \cite{GW}, it is well known that the left inequality in the conjecture follows from the Ahlfors-Schwarz lemma and the hypothesis of sectional curvature bounded above by a negative constant.  It is very natural to ask the question: \textit{Does the right inequality in the conjecture follow from the Schwarz lemma and the hypothesis of holomorphic sectional curvature ?} Applying to the Schwarz lemma from a complete K\"ahler into a complex Finsler manifold \cite{NZ1} (For more details about Schwarz lemma on complex Finsler manifolds, we refer to \cite{Nie,NZ2,LQZ,wan,shen}), we get the following theorem.
\begin{theorem}(cf. Corollary \ref{C-7.1})\label{T-1.6}
 Suppose that $\mathcal{D}$ is a bounded strongly convex domain with smooth boundary in $\mathbb{C}^n$. And let $(\mathcal{D},h)$ be a complete K\"ahler manifold whose holomorphic sectional curvature $K_h$ satisfies $-A \leq K_h \leq -B$ for some positive constants $A$ and $B$. Then Kobayashi metric and Carath\'eodory metric satisfy
 $$ C^{-1} h(z;v) \leq\mathfrak{K}^2(z;v)=\mathfrak{C}^2(z;v) \leq Ch(z;v), \forall z \in \mathcal{D}, \forall v \in T^{1,0}_z\mathcal{D}.$$
 Here $C>0$ is depending only $A, B$, and independent of complex dimension $n$ of complex manifold $\mathcal{D}$.
 \end{theorem}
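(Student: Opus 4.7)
The plan is to reduce the two-sided estimate to two applications of the Schwarz lemma, exploiting Lempert's theorem to replace the Kobayashi metric by the Carath\'eodory metric, which is more amenable to upper bounds. The dimension-independence of the constant $C$, which is the novelty relative to the Wu-Yau theorem, will come from the Schwarz lemma from a complete K\"ahler manifold into a complex Finsler manifold stated in \cite{Nie,NZ1,NZ2,LQZ}, whose constant depends only on pointwise curvature data.

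First I would invoke Lempert's theorem (Theorem \ref{Lempert}) to conclude that on a bounded strongly convex domain with smooth boundary the Kobayashi metric $\mathfrak{K}$ coincides with the Carath\'eodory metric $\mathfrak{C}$ and that both are $C^\infty$ strongly pseudoconvex complex Finsler metrics in the Abate-Patrizio sense. This reduces the theorem to proving $C^{-1} h(z;v)\leq \mathfrak{K}^2(z;v)\leq C h(z;v)$ with $C=C(A,B)$.

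For the left-hand inequality, the upper curvature bound $K_h\leq -B$ together with the classical Ahlfors-Schwarz lemma, applied to any holomorphic disc $f\colon\Delta\to (\mathcal{D},h)$ with $f(0)=z$, yields $f^*h\leq (4/B)\, h_\Delta$, where $h_\Delta$ is the Poincar\'e metric. Passing to the infimum over such $f$ in the definition of the Kobayashi-Royden infinitesimal metric gives $h(z;v)\leq (4/B)\,\mathfrak{K}^2(z;v)$. For the right-hand inequality I would switch to the Carath\'eodory side using Lempert's identity $\mathfrak{K}=\mathfrak{C}$. For every holomorphic $f\colon\mathcal{D}\to\Delta$, apply the Schwarz lemma from a complete K\"ahler manifold into a complex Finsler manifold with source $(\mathcal{D},h)$ (complete K\"ahler, $K_h\geq -A$) and target $(\Delta,h_\Delta)$ (complex Finsler of constant holomorphic sectional curvature $-4$); this produces $f^*h_\Delta\leq (A/4)\,h$ with a constant depending only on $A$. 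Discarding the denominator $(1-|f(z)|^2)^2$ gives $|df_z(v)|^2\leq (A/4)\,h(z;v)$, and taking supremum over all $f\colon\mathcal{D}\to\Delta$ yields $\mathfrak{C}^2(z;v)\leq (A/4)\,h(z;v)$. Lempert's equality then gives $\mathfrak{K}^2(z;v)\leq (A/4)\,h(z;v)$, completing the chain $C^{-1} h\leq \mathfrak{K}^2=\mathfrak{C}^2\leq C h$.

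The main obstacle is ensuring that the constant in the complete K\"ahler--complex Finsler Schwarz lemma is genuinely independent of $n$. This hinges on the feature that the Laplacian-type computation underlying the Finsler Schwarz lemma in \cite{Nie,NZ1,NZ2,LQZ} tests only a single complex direction on the source at a time, so no trace over the $n$ complex directions of the source enters, in contrast with the classical K\"ahler Schwarz lemma of Royden-Yau in which such a trace produces the usual factor involving $\dim \mathcal{D}$. Lempert's theorem supplies the other delicate ingredient, namely the Finsler-regularity of $\mathfrak{K}$ and its coincidence with $\mathfrak{C}$; once these two inputs are in place the argument is formal, and the passage from $f\colon\mathcal{D}\to\Delta$ to $\mathfrak{C}^2$ is the standard supremum defining the Carath\'eodory metric.
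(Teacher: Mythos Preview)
Your argument is correct and reaches the same constants $B/4$ and $A/4$, but your route to the upper bound differs from the paper's. The paper first observes (Theorem~\ref{T-5.2}, Remark~\ref{R-5.2}) that a bounded strongly convex domain with smooth boundary is a Lempert manifold, so that $\mathfrak{K}^2_{\mathcal{D}}$ itself is a smooth strongly pseudoconvex complex Finsler metric of constant holomorphic sectional curvature $-4$; it then applies the K\"ahler--Finsler Schwarz lemma (Theorem~\ref{SL}) to the identity map $id:(\mathcal{D},h)\to(\mathcal{D},\mathfrak{K}^2_{\mathcal{D}})$, obtaining $\mathfrak{K}^2\leq (A/4)\,h$ in one stroke. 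You instead bound $\mathfrak{C}^2$ by applying the Schwarz lemma to each holomorphic $f:\mathcal{D}\to\Delta$ with target the Poincar\'e disc, take the supremum, and only then invoke Lempert's identity $\mathfrak{K}=\mathfrak{C}$. Your version has the advantage that the target is one-dimensional and Hermitian, so the Finsler regularity of $\mathfrak{K}$ plays no role in the estimate and only the coincidence $\mathfrak{K}=\mathfrak{C}$ is used; the paper's version, by contrast, makes essential use of the Lempert-manifold structure to have a Finsler target, which is more in keeping with the paper's theme of exhibiting applications of Theorem~\ref{SL} with genuinely Finsler targets.

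One point you should make explicit: the Schwarz lemma you cite (Theorem~\ref{SL}) requires, in addition to $K_h\geq -A$, that the \emph{sectional} curvature of the source be bounded from below. The paper handles this via Theorem~\ref{sectional}, which shows that a two-sided bound $-A\leq K_h\leq -B$ on holomorphic sectional curvature of a K\"ahler metric forces a (dimension-free) two-sided bound on sectional curvature. You implicitly rely on the same fact, so it is worth stating; without it the hypothesis of the Schwarz lemma is not verified. Once this is added your proof is complete, and the dimension-independence of $C$ follows exactly as you say, since both the polarization bound in Theorem~\ref{sectional} and the ratio $K_1/K_2$ in Theorem~\ref{SL} are free of $n$.
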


 If the Carath\'eodory pseudometric on a complex manifold $M$ is a smooth  strongly pseudoconvex complex Finsler metric, we have the following theorem.
 \begin{theorem}(cf. Theorem \ref{T-7.2})\label{T-1.7}
For a suitable choice of positive constant $C$, suppose that $(M,h)$ is a complete K\"ahler manifold whose holomorphic sectional curvature $K_h$ satisfies $-A \leq K_h \leq B$ for some positive constants $A$ and $B$. If the Carath\'eodory metric $\mathfrak{C}^2$ on complex manifold $M$ is a smooth  strongly pseudoconvex complex Finsler metric. Then $H=C\mathfrak{C}^2+h$ and the K\"ahler metric $h$ are equivalent. What's more, $M$  is a complete Kobayashi-hyperbolic manifold.
\end{theorem}
\section{Terminology and preliminaries}
\noindent

In this section, we recall the following some basic definitions, facts and propositions in this paper. For more details, we refer to \cite{abate,kobayashi,chenb,CK}.

\subsection{Complex Finsler geometry}
\noindent

Let $M$ be a complex manifold of complex dimension $n$. Let $\{z^1,\cdots,z^n\}$ be a set of local complex coordinates, and let $\{\frac{\partial}{\partial z^{\alpha}}\}_{1 \leq \alpha \leq n}$ be the corresponding natural frame of $T^{1,0}M$. So any non-zero element in $\widetilde{M}=T^{1,0}M \setminus \{\text{zero section}\}$ can be written as
$$v=v^{\alpha}\frac{\partial}{\partial z^{\alpha}} \in \widetilde{M},$$
where we adopt the summation convention of Einstein. In this way, one gets a local coordinate system on the complex manifold $\widetilde{M}$:
$$(z;v)=(z^1,\cdots,z^n;v^1,\cdots,v^n).$$

\begin{definition} \cite{abate}
A complex Finsler metric $G:=F^2$ on a complex manifold $M$ is a continuous function $G:T^{1,0}M \rightarrow [0,+\infty)$ satisfying $G$ is smooth  on $\tilde{M}:=T^{1,0}M\setminus\{\mbox{zero section}\}$ and
$$G(z;\zeta v)=|\zeta|^2G(z;v)$$
 for all $(z;v) \in T^{1,0}M$ and $\zeta \in \mathbb{C}$.
A complex Finsler metric $G$ is called strongly pseudoconvex if the Levi matrix
\begin{equation*}
(G_{\alpha \overline{\beta}})= \Big(\frac{\partial^2 G}{\partial v^{\alpha}\partial \overline{v}^{\beta}}\Big)
\end{equation*}
is positive definite on $\widetilde{M}$.
\end{definition}
Any $C^\infty$ Hermitian metric on a complex manifold $M$ is naturally a strongly pseudoconvex complex Finsler metric. Conversely,
if a complex Finsler metric $G$ on a complex manifold $M$ is $C^\infty$ over the whole holomorphic tangent bundle $T^{1,0}M$, then it is necessary a $C^\infty$ Hermitian metric. For this reason, in general the non-trivial (non-Hermitian quadratic) examples of complex Finsler metrics are only required to be smooth  over the slit holomorphic tangent bundle $\widetilde{M}$. 

If $\lambda=\lambda(z) dz\otimes d{\bar{z}}, \lambda(z)>0$ is a Hermitian metric on a Riemann surface $R$. Then the Gaussian curvature $K(\lambda)(z)$ of $\lambda$ at the point $z$ is given by
$$K(\lambda)(z)=- \frac{1}{\lambda(z)}\frac{\partial^2 \log \lambda}{\partial z \partial \bar{z}}.$$
\begin{proposition}\label{L-2.2}
 For a positive constant $C>0$, let $\lambda$ be a Hermitian metric on a Riemann surface $R$. Then
 $$K(C\lambda)(z)=\frac{1}{C}K(\lambda)(z).$$
\end{proposition}
\begin{proof}
\begin{align*}\begin{split}
K(C\lambda)(z)&=-\frac{1}{C\lambda(z)}\frac{\partial^2 \log (C\lambda(z))}{\partial z \partial \bar{z}}\\
&=-\frac{1}{C\lambda(z)}\frac{\partial^2 (\log\lambda(z)+ \log C)}{\partial z \partial \bar{z}}\\
&=-\frac{1}{C}\frac{1}{\lambda(z)}\frac{\partial^2 \log \lambda(z)}{\partial z \partial \bar{z}}\\
&=\frac{1}{C}K(\lambda)(z).
\end{split}\end{align*}
\end{proof}

The following proposition was actually outlined and used in Wong and Wu \cite[Lemma 4.3, p. 430]{WW}. We derive the proposition needed for Theorem \ref{MT} and give a brief proof here.
\begin{proposition} \label{L-2.1}
Suppose that $\lambda, \mu$ are Hermitian metrics on a Riemann surface $R$. Then
$$ K(\lambda+\mu)(z) \leq \frac{\lambda^2}{(\lambda+\mu)^2}K(\lambda)(z)+\frac{\mu^2}{(\lambda+\mu)^2}K(\mu)(z),$$
where $K(\lambda)(z), K(\mu)(z), K(\lambda+\mu)(z)$ are the Gaussian curvature of the Hermitian metric $\lambda, \mu, \lambda+\mu$ at the point $z$, respectively. Moreover,
$$K(\lambda+\mu)(z) \leq K(\lambda)(z)+K(\mu)(z).$$
\end{proposition}
\begin{proof}
By direct calculation, we have
\begin{align*}\begin{split}
\frac{\partial^2 \log (\lambda+\mu)}{\partial z \partial \bar{z}}&=\frac{\partial }{\partial z}\Big(\frac{\frac{\partial \lambda}{\partial \bar{z}}+\frac{\partial \mu}{\partial \bar{z}}}{\lambda+\mu}\Big)\\
&=\frac{\lambda\frac{\partial^2\lambda}{\partial z \partial \bar{z}}-\frac{\partial \lambda}{\partial z}\frac{\partial \lambda}{\partial \bar{z}} +\mu\frac{\partial^2\mu}{\partial z\partial \bar{z}}-\frac{\partial \mu}{\partial z}\frac{\partial \mu}{\partial \bar{z}}+\lambda\frac{\partial^2\mu}{\partial z \partial \bar{z}}+ \mu\frac{\partial^2\lambda}{\partial z \partial \bar{z}}-\frac{\partial \lambda}{\partial z}\frac{\partial \mu}{\partial \bar{z}}-\frac{\partial \mu}{\partial z}\frac{\partial \lambda}{\partial \bar{z}}}{(\lambda+\mu)^2}\\
&=\frac{\lambda^2\frac{\partial^2 \log \lambda}{\partial z \partial \bar{z}}+\mu^2\frac{\partial^2 \log \mu}{\partial z \partial \bar{z}}+\frac{\lambda}{\mu}(\mu^2\frac{\partial^2 \log \mu}{\partial z \partial \bar{z}}+\frac{\partial \mu}{\partial z}\frac{\partial \mu}{\partial \bar{z}})}{(\lambda+\mu)^2}\\
&+\frac{\frac{\mu}{\lambda}(\lambda^2\frac{\partial^2 \log \lambda}{\partial z \partial \bar{z}}+\frac{\partial \lambda}{\partial z}\frac{\partial \lambda}{\partial \bar{z}}) -\frac{\partial \lambda}{\partial z}\frac{\partial \mu}{\partial \bar{z}}-\frac{\partial \mu}{\partial z}\frac{\partial \lambda}{\partial \bar{z}}}{(\lambda+\mu)^2}\\
&=\frac{\lambda^2\frac{\partial^2 \log \lambda}{\partial z \partial \bar{z}}+\mu^2\frac{\partial^2 \log \mu}{\partial z \partial \bar{z}}+\lambda\mu(\frac{\partial^2 \log \mu}{\partial z \partial \bar{z}}+\frac{\partial \log \mu}{\partial z}\frac{\partial \log \mu}{\partial \bar{z}})}{(\lambda+\mu)^2}\\
&+\frac{\lambda\mu(\frac{\partial^2 \log \lambda}{\partial z \partial \bar{z}}+\frac{\partial \log \lambda}{\partial z}\frac{\partial \log \lambda}{\partial \bar{z}})-\lambda\mu\frac{\partial \log \lambda}{\partial z}\frac{\partial \log \mu}{\partial \bar{z}}-\lambda\mu\frac{\partial \log \mu}{\partial z}\frac{\partial \log \lambda}{\partial \bar{z}}}{(\lambda+\mu)^2}\\
&=\frac{\lambda^2\frac{\partial^2 \log \lambda}{\partial z \partial \bar{z}}+\mu^2\frac{\partial^2 \log \mu}{\partial z \partial \bar{z}}+\lambda\mu\frac{\partial^2 \log \mu}{\partial z \partial \bar{z}}+\lambda\mu\frac{\partial^2 \log \lambda}{\partial z \partial \bar{z}}}{(\lambda+\mu)^2}\\
&+\frac{\lambda\mu(\frac{ \partial \log \lambda}{\partial z}-\frac{\partial \log \mu}{\partial z})(\frac{ \partial \log \lambda}{\partial \bar{z}}-\frac{\partial \log \mu}{\partial \bar{z}})}{(\lambda+\mu)^2}\\
&=\frac{\lambda(\lambda+\mu)\frac{\partial^2 \log \lambda}{\partial z \partial \bar{z}}+\mu(\lambda+\mu)\frac{\partial^2 \log \mu}{\partial z \partial \bar{z}}+\lambda\mu|\frac{ \partial \log \lambda}{\partial z}-\frac{\partial \log \mu}{\partial z}|^2}{(\lambda+\mu)^2}
\end{split}\end{align*}
Therefore we have
\begin{align*}\begin{split}
&-\frac{\partial^2 \log (\lambda+\mu)}{\partial z \partial \bar{z}} \leq -\frac{\lambda}{\lambda+\mu}\frac{\partial^2 \log \lambda}{\partial z \partial \bar{z}}-\frac{\mu}{\lambda+\mu}\frac{\partial^2 \log \mu}{\partial z \partial \bar{z}},
\end{split}\end{align*}
By the definition of Gaussian curvature, we obtain
$$K(\lambda+\mu)(z) \leq \frac{\lambda^2}{(\lambda+\mu)^2}K(\lambda)(z)+\frac{\mu^2}{(\lambda+\mu)^2}K(\mu)(z)\leq K(\lambda)(z)+K(\mu)(z).$$
\end{proof}

Let $(M,G)$ be a strongly pseudoconvex complex Finsler manifold $M$, and take $v \in \widetilde{M}$. Then the holomorphic sectional curvature $K_G(v)$ of $G$ along a non zero tangent vector $v$ is given by
\begin{equation*}
K_G(v)=\frac{2}{G(v)^2}\langle\Omega(\chi,\bar{\chi})\chi,\chi\rangle_v.
\end{equation*}
where $\chi=v^\alpha\delta_\alpha$ is the complex radial horizontal vector field and $\Omega$ is the curvature tensor of the Chern-Finsler connection associated to $(M,G)$.

Abate and Patrizio found an phenomenon that the holomorphic sectional curvature of a complex Finsler metric $G$ is the supremum of the Gaussian curvature of the induced metric through a family of holomorphic maps.

\begin{proposition}\label{P-2.1} \cite[p. 110, Corollary 2.5.4]{abate}
Suppose that $G:T^{1,0}M \rightarrow [0,+\infty)$ a strongly pseudoconvex complex Finsler metric on a complex manifold $M$, take $z \in M$ and $0 \neq v \in T_z^{1,0}M$. Then
$$K_G(v)=\sup\{K(\varphi^*G)(0)\},$$
where the supremum is taken with respect to the family of all holomorphic maps $\varphi: \mathbb{D} \rightarrow M$ with $\varphi(0)=z$ and $\varphi'(0)=\lambda v$ for some $\lambda \in \mathbb{C}\backslash \{0\}$; $K(\varphi^*G)(0)$ is the Gaussian curvature of $(\mathbb{D},\varphi^*G)$ at the point $0$.
\end{proposition}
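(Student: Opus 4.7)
The plan is to reduce the identity to a pointwise comparison at $0\in\mathbb{D}$ via a Taylor expansion of the pullback metric. For any holomorphic $\varphi:\mathbb{D}\to M$ with $\varphi(0)=z$ and $\varphi'(0)=\lambda v$, I would first write the pullback as the conformal metric $\varphi^{*}G(\zeta)=G(\varphi(\zeta);\varphi'(\zeta))\,d\zeta\otimes d\bar\zeta$, which is smooth and strictly positive near $\zeta=0$ since $\varphi'(0)\neq 0$ and $G$ is smooth off the zero section. Its Gaussian curvature at the origin reads
$$K(\varphi^{*}G)(0)=-\frac{2}{G(z;\lambda v)}\,\frac{\partial^{2}\log G(\varphi(\zeta);\varphi'(\zeta))}{\partial\zeta\,\partial\bar\zeta}\bigg|_{\zeta=0}.$$
By the $(1,1)$-homogeneity $G(z;\mu v)=|\mu|^{2}G(z;v)$, the quantity $K(\varphi^{*}G)(0)$ is invariant under the reparametrization $\varphi(\zeta)\mapsto\varphi(\mu\zeta)$, so I may assume $\lambda=1$ without loss of generality.

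Next I would Taylor expand $\varphi$ at $\zeta=0$ as $\varphi^{\alpha}(\zeta)=z^{\alpha}+\zeta v^{\alpha}+\zeta^{2}a^{\alpha}+O(|\zeta|^{3})$ and substitute into $\log G(\varphi(\zeta);\varphi'(\zeta))$. Differentiating twice and separating the $\partial_{z}\partial_{\bar z}$, $\partial_{v}\partial_{\bar v}$ and mixed contributions, the ordinary derivatives reassemble into the horizontal derivatives of the Chern--Finsler connection acting on the radial vector field $\chi=v^{\alpha}\delta_{\alpha}$. The (somewhat delicate) bookkeeping, which is Abate--Patrizio's computation in \cite[Section 2.5]{abate}, produces the key identity
$$K(\varphi^{*}G)(0)=K_{G}(v)-\frac{G_{\alpha\bar\beta}(z;v)\,A^{\alpha}(\varphi)\,\overline{A^{\beta}(\varphi)}}{G(z;v)^{2}},$$
where $A^{\alpha}(\varphi)$ is the ``complex geodesic acceleration'' of $\varphi$ at $0$, built from $\ddot\varphi(0)$ and the Chern--Finsler connection coefficients at $(z;v)$; it vanishes exactly when $\varphi$ agrees to second order with a complex geodesic through $z$ in the direction $v$.

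Since $(G_{\alpha\bar\beta})$ is positive definite by strong pseudoconvexity, the correction term is non-positive, so $K(\varphi^{*}G)(0)\le K_{G}(v)$ for every admissible $\varphi$; passing to the supremum gives one inequality. For the reverse, I would exhibit one $\varphi_{0}$ that attains equality: in local coordinates near $z$, solve $A^{\alpha}(\varphi_{0})=0$ for $a^{\alpha}$ and set $\varphi_{0}(\zeta)=z+\zeta v+\zeta^{2}a$, rescaling the disk if necessary so that $\varphi_{0}(\mathbb{D})\subset M$. By construction the correction term vanishes for $\varphi_{0}$, so $K(\varphi_{0}^{*}G)(0)=K_{G}(v)$ and the supremum is actually attained.

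The main obstacle is the calculation in the second paragraph: because the Chern--Finsler connection is non-linear in the fiber variables, one must differentiate $G$ in its $z$ and $v$ slots independently and then recognize the mixed second derivatives involving $\partial^{2}G/\partial z\,\partial\bar v$ and $\partial^{2}G/\partial v\,\partial\bar z$ as precisely those that convert ordinary derivatives into the horizontal derivatives $\delta_{\alpha}$ used in defining the curvature tensor $\Omega$. Once this identification is made, the acceleration term emerges naturally as a squared norm with respect to $G_{\alpha\bar\beta}$, which is exactly what renders the Schwarz-type inequality tight and lets a single well-chosen quadratic germ $\varphi_{0}$ realize the supremum.
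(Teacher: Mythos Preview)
The paper does not prove Proposition~\ref{P-2.1}; it merely cites it from \cite[p.~110, Corollary~2.5.4]{abate} as a known result. Your proposal is therefore not being compared against any argument in the present paper, but it does track the proof in Abate--Patrizio rather closely: the Taylor expansion of $\varphi$, the appearance of the Chern--Finsler ``acceleration'' $A^{\alpha}(\varphi)$, the identity
\[
K(\varphi^{*}G)(0)=K_{G}(v)-\frac{c}{G(z;v)^{2}}\,G_{\alpha\bar\beta}(z;v)\,A^{\alpha}(\varphi)\,\overline{A^{\beta}(\varphi)}
\]
(with a positive constant $c$), the use of strong pseudoconvexity to make the correction non-positive, and the explicit second-order germ $\varphi_{0}$ killing $A^{\alpha}$ are exactly the ingredients of Abate--Patrizio's Lemma~2.5.3 and Corollary~2.5.4. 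Your sketch is correct in outline; the one place that genuinely requires care, as you note, is the regrouping of the mixed $\partial_z\partial_{\bar v}$ and $\partial_v\partial_{\bar z}$ terms into horizontal derivatives $\delta_{\alpha}$, but that is precisely the computation carried out in \cite[Lemma~2.5.3]{abate}, so citing it there would close the argument.
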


\subsection{Invariant metrics}
\noindent

The Bergman pseudometric $g_B$ on a complex manifold $M$ of complex dimension $n$ is a K\"ahler pseudometric with local potential given by the coefficients of the Bergman kernel $K(x, x)$. Let $f$ be a $L^2$-holomorphic $n$-form on $M$. In terms of local coordinates $\left(z_1, \ldots, z_n\right)$ on a coordinate chart $U$, let $e_{K_M}=d z^1 \wedge \ldots \wedge d z^n$ be a local basis of the canonical line bundle $K_M$ on $U$. We can write $f$ as $f_U e_{K_M}$ on $U$. Let $f_i$ be an orthonormal basis of $L^2$-sections in $H_{(2)}^0\left(M, K_M\right)$. Note that from conformality, the choice is independent of the metric on $M$. The Bergman kernel is given by $K(x, x)=\sum_i f_i \wedge \overline{f_i}$. Let $K_U(x, x)=\sum_i f_{U, i} \overline{f_{U, i}}$ be the coefficient of $K(x, x)$ in terms of the local coordinates. The Bergman metric is given by the K\"ahler form
$$
\omega_B=\sqrt{-1} \partial \bar{\partial} \log K_U(x, x)=\sqrt{-1} \frac{1}{K_U(x, x)^2} \sum_{i<j}\left(f_i \partial f_j-f_j \partial f_i\right) \wedge \overline{\left(f_i \partial f_j-f_j \partial f_i\right)},
$$
which is clearly independent of the choice of a basis and $U$. As the Bergman kernel is independent of basis, for each fixed point $x \in M$,
$$
K_U(x, x)=\sup _{f \in H_{(2)}^0\left(M, K_M\right),\|f\|=1}\left|f_U(x)\right|^2,
$$
where $\|\cdot\|$ stands for the $L^2$-norm. We may assume that $\sup _{f \in H_{(2)}^0\left(M, K_M\right),\|f\|=1}\left|f_U(x)\right|$ is realized by $f_x \in H_{(2)}^0(M, K)$ with $\left\|f_x\right\|=1$ so that $K_U(x, x)=\left|f_{x, U}(x)\right|^2$.

For a complex manifold, there are natural and intrinsic complex Finsler pseudometric, i.e., the Kobayashi pseudometric and the Carath\'eodory pseudometric. It is known that the Kobayashi pseudometric is the maximum pseudometric among pseudometric satisfying the decreasing property. However, the Carath\'eodory pseudometric is the minimum. Let us recall some definitions.

\begin{definition}\cite[p. 86, Section 3.5]{kobayashi}
Let $M$ be a complex manifold. For any $(z,v) \in T^{1,0}M$, then the \textit{Kobayashi pseudometric} $\mathfrak{K}_M: T^{1,0}M \rightarrow [0,+\infty)$ of $M$ is given by
$$\mathfrak{K}_M(z,v)=\inf_{R>0} \frac{1}{R},$$
where $R$ ranges over all positive numbers for which there is a $\phi \in \text{Hol}(\mathbb{D}_R,M)$ with $\phi(0)=z$, and $\phi_*(\frac{\partial}{\partial z}|_{z=0})=v$. Here $\text{Hol}(X,Y)$ denotes the set of holomorphic maps from $X$ to $Y$, and
$\mathbb{D}_R\equiv \{z \in \mathbb{C}; |z| <R\}$ and $\mathbb{D}\equiv \mathbb{D}_1.$
\end{definition}
Equivalently, one can verify that, for each $(z,v) \in T^{1,0}M$,
\begin{align*}\begin{split}
\mathfrak{K}_M(z,v)&=\inf \{ |V|_{\mathcal{P}};V \in T^{1,0}\mathbb{D}, \exists~\phi \in \text{Hol}(\mathbb{D}, M)~\text{with}~\phi_*(V)=v\}\\
&=\inf \{ |V|_{\mathbb{C}};V \in T^{1,0}_0\mathbb{D}, \exists~\phi \in \text{Hol}(\mathbb{D}, M)~\text{such that}~\phi(0)=x,~\phi_*(V)=v\}.
\end{split}\end{align*}
Where $|\cdot|_{\mathcal{P}}$ and $|\cdot|_{\mathbb{C}}$ are, respectively, the norms with respect to the Poincar\'e metric $\mathcal{P}=(1-|z|^2)^{-2} dz \otimes d\overline{z}$ and Euclidean metric ${E}=dz \otimes d\overline{z}$.

\begin{definition}\cite[p. 84, Definition 2.3.4]{abate}
Suppose that $M$ is a complex manifold, then the \textit{Carath\'eodory pseudometric} $\mathfrak{C}_M: T^{1,0}M \rightarrow [0, \infty)$ of $M$ is given by:
$$\mathfrak{C}(z,v)=\sup \{ |df_z(v)|_{\mathbb{C}}; \exists f \in \text{Hol}(M,\mathbb{D}) ~\text{with}~ f(0)=z\}, \forall (z,v) \in T^{1,0}M.$$
\end{definition}

If we consider bounded domains in $\mathbb{C}^n$, both Kobayashi pseudometric and Carath\'eodory pseudometric are non-degenerate complex Finsler metrics. In 1980s, Lempert \cite{lempert,lempert1} proved the following theorem.
\begin{theorem}\cite[ also see p. 220, Theorem 4.8.13]{kobayashi}\label{Lempert}
If $\mathcal{D}$ is a bounded strongly convex domain with $C^k$-boundary  in $\mathbb{C}^n$. Then the Kobayashi metric $\mathfrak{K}_{\mathcal{D}}$ and Carath\'eodory metric $\mathfrak{C}_{\mathcal{D}}$ are $C^k$-strongly pseudoconvex complex Finsler metrics such that
$$\mathfrak{K}_{\mathcal{D}}=\mathfrak{C}_{\mathcal{D}}.$$
\end{theorem}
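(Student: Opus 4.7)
The plan is to follow Lempert's original strategy, which establishes the equality of the Kobayashi and Carath\'eodory metrics on a strongly convex domain by constructing, through every point and in every direction, a distinguished holomorphic disc (a \emph{complex geodesic}) together with a holomorphic left-inverse that realizes the Carath\'eodory supremum. First I would fix $z_0 \in \mathcal{D}$ and $v \in T_{z_0}^{1,0}\mathcal{D}\setminus\{0\}$, and by a normal-families argument extract an extremal disc $\varphi : \mathbb{D}\to \mathcal{D}$ with $\varphi(0)=z_0$, $\varphi'(0)=\lambda v$ realizing the infimum in the definition of $\mathfrak{K}_{\mathcal{D}}(z_0,v)$. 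Strong convexity with $C^k$-boundary implies that any such extremal disc is proper, extends $C^{k-1/2}$-smoothly to $\overline{\mathbb{D}}$, and sends $\partial\mathbb{D}$ into $\partial\mathcal{D}$; this follows from a standard barrier/peak-function construction that forces the disc to touch the boundary.

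The core step is to manufacture, for this disc $\varphi$, a holomorphic \emph{left-inverse} $f:\mathcal{D}\to\mathbb{D}$ satisfying $f\circ\varphi=\operatorname{id}_{\mathbb{D}}$. I would do this via the stationary-disc machinery: write down the defining equation for $\varphi$ as a nonlinear Riemann--Hilbert problem with boundary condition dictated by the outward real normal to $\partial\mathcal{D}$ along $\varphi(\partial\mathbb{D})$, and produce a dual holomorphic map $\tilde\varphi:\mathbb{D}\to (\mathbb{C}^n)^*$ whose boundary values are conormals to $\partial\mathcal{D}$ along $\varphi(\partial\mathbb{D})$. The pairing $\langle \tilde\varphi(\zeta), w-\varphi(\zeta)\rangle$ is then, by the convexity of $\mathcal{D}$, a holomorphic function on $\mathbb{D}\times\mathcal{D}$ with real part of definite sign on the boundary, and after a Cayley/Schwarz normalization produces the required $f\in\mathrm{Hol}(\mathcal{D},\mathbb{D})$. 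Once $f$ exists, the decreasing property gives
\begin{equation*}
\mathfrak{C}_{\mathcal{D}}(z_0,v)\leq \mathfrak{K}_{\mathcal{D}}(z_0,v)=|\lambda|^{-1}=|df_{z_0}(v)|\leq \mathfrak{C}_{\mathcal{D}}(z_0,v),
\end{equation*}
so all three quantities coincide at $(z_0,v)$. Since $(z_0,v)$ was arbitrary, $\mathfrak{K}_{\mathcal{D}}=\mathfrak{C}_{\mathcal{D}}$.

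For the Finsler regularity statement, I would prove uniqueness of the extremal disc $\varphi$ through $(z_0,v)$: if $\varphi_1,\varphi_2$ were two such discs, their average would (by strong convexity of $\overline{\mathcal{D}}$) give a competing disc contradicting extremality unless $\varphi_1=\varphi_2$ up to an automorphism of $\mathbb{D}$ fixing $0$. Uniqueness in hand, a standard implicit-function-theorem argument applied to the Riemann--Hilbert problem in suitable $C^{k-1/2}$-Banach spaces of disc maps yields that $\varphi$, and therefore $\lambda=\lambda(z,v)$, depends $C^{k-1}$-smoothly on $(z,v)\in T^{1,0}\mathcal{D}\setminus\{0\}$. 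This gives $C^{k-1}$-smoothness of $\mathfrak{K}_{\mathcal{D}}^{2}=|\lambda(z,v)|^{-2}$ off the zero section, together with the $(1,0)$-homogeneity built into the definition. Strong pseudoconvexity of the resulting complex Finsler metric is extracted from the second variation of the extremal problem, for which strong convexity of $\partial\mathcal{D}$ supplies a strict positivity on the non-radial directions in $T^{1,0}\mathcal{D}$.

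The genuinely hard part is the Riemann--Hilbert step producing the dual disc $\tilde\varphi$ and with it the left-inverse $f$; everything else (existence of extremals, smooth extension to the boundary, uniqueness, and smooth parameter dependence) is either classical or follows by soft normal-families / implicit-function arguments once the dual map is available. That one step is where strong convexity (as opposed to mere strong pseudoconvexity) is indispensable, since it is strong convexity that makes the conormal boundary condition elliptic with the correct Maslov index and that makes the pairing $\langle \tilde\varphi,\, w-\varphi\rangle$ map $\mathcal{D}$ into a half-plane.
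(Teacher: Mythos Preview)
The paper does not supply its own proof of this theorem: it is quoted as a known result, with citations to Lempert's original papers \cite{lempert,lempert1} and to Kobayashi's monograph \cite[p.~220, Theorem 4.8.13]{kobayashi}, followed by Remark~\ref{R-5.1} pointing to \cite[p.~341--343]{lempert1} for the differentiability statement. There is therefore nothing in the paper to compare your argument against beyond the bare references.

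That said, your outline is a faithful sketch of Lempert's strategy. The sequence you describe---existence of an extremal disc by normal families, its boundary regularity via strong convexity, construction of the dual (stationary) disc $\tilde\varphi$ solving the conormal Riemann--Hilbert problem, manufacture of the holomorphic retraction $f$ from the pairing $\langle\tilde\varphi,\,w-\varphi\rangle$, and then uniqueness plus implicit-function-theorem smooth dependence---is exactly the architecture of \cite{lempert,lempert1}. Your identification of the dual-disc step as the genuinely hard one, and of strong convexity (rather than strong pseudoconvexity) as the feature that makes the boundary problem tractable and the half-plane mapping work, is accurate. One small point to watch: you write that $\lambda(z,v)$ depends $C^{k-1}$-smoothly on $(z,v)$, whereas the theorem as stated asserts that $\mathfrak{K}_{\mathcal{D}}$ is a $C^k$ Finsler metric; Lempert's sharper regularity analysis recovers the full order, so if you were to write this out you would need to be careful not to lose a derivative in the implicit-function step.
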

\begin{remark}\cite[see p. 341-343]{lempert1}\label{R-5.1}
 As long as $\mathcal{D}$ is a bounded strongly convex domain with smooth boundary in $\mathbb{C}^n$, the differentiability condition of two intrinsic metrics is satisfied in sense of Abate and Patrizio in this paper.
\end{remark}
In \cite{WB}, Wong proved the holomorphic sectional curvature of the Kobayashi metric $\mathfrak{K}_M$ is greater than or equal to $-4$. Aslo, he proved the holomorphic sectional curvature of the Carath\'eodory metric $\mathfrak{C}_M$ is less than or equal to $-4$. By Theorem \ref{Lempert}, we know that the Kobayashi metric on a bounded strongly convex domain with smooth boundary is complex Finsler metric in sense of Abate and Patrizio whose holomorphic sectional curvature is constant and equal to $-4$.
\begin{definition}\cite[p. 291, Definition 1.1]{Spiro}\label{D-5.1}
A complex manifold $M$ is a \textit{Lempert manifold} if

$(1)$ the Kobayashi pseudometric, denoted by $\mathfrak{K}_M$, (is the infinitesimal form of the Kobayahsi pseudodistance.) is a strongly pseudoconvex complex Finsler metric, that is:

$a)$ it is smooth  function on $\widetilde{M}$ with values $\mathbb{R}^+$;

$b)$ $\mathfrak{K}_M(\lambda v)=|\lambda|\mathfrak{K}_M(v)$ for any $\lambda \in \mathbb{C}^*$ and $v \in \widetilde{M}$;

$c)$ at any point $z \in M$ the hypersurface $S_z= \{v \in T^{1,0}_zM: \mathfrak{K}_M(v)=1\}$ is strongly pseudoconvex in $T^{1,0}_zM$;

$(2)$ for any non-vanishing complex vector $ \omega \in T^{\mathbb{C}}_zM \subset T^{\mathbb{C}}M$, there exists a complex curve $\gamma_\omega: U \subset \mathbb{C} \rightarrow M$, such that $\gamma_\omega(0)=z, \gamma_\omega'(0)=\omega$ and $\gamma(U)$ is a totally geodesic submanifold of $M$;

$(3)$ the metric, which is induced by $\mathfrak{K}_M$ on the totally geodesic complex curve $\gamma_\omega(U)$, is K\"ahler and with constant holomorphic sectional curvature equal to $-4$;

$(4)$ the (finite) Kobayashi distance $d_k$, determined by $\mathfrak{K}_M$, is complete and the exponential map $exp: T_z^{1,0}M \rightarrow M$ is a diffeomorphism for any $z \in M$.
\end{definition}
\begin{remark} \cite[p. 292, Theorem 1.3]{Spiro}\label{R-5.2}
In 1990, Faran\cite{FJJ} proved a complex manifold $M$ is Lempert manifold if and only if it admits a strongly pseudoconvex complex Finsler metric $G$, which verifies $(2), (3)$, and $(4)$ of Definition \ref{D-5.1}. In this case $F$ coincides with the Kobayashi metric $\mathfrak{K}_M$. By Proposition \ref{P-2.1}, we know that the holomorphic sectional curvature of Lempert manifolds is equal to $-4$.
\end{remark}

An immediate interest for Lempert manifolds comes from the well-known result of Lempert on the Kobayashi metric of strongly convex domains in $\mathbb{C}^n$.
\begin{theorem} \cite[p. 292, Theorem 1.2]{Spiro}\label{T-5.2}
If $M$ is a  bounded strongly convex domain with smooth boundary in $\mathbb{C}^n$, then $M$ is a Lempert manifold.
\end{theorem}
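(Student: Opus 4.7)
The plan is to verify the four defining conditions of a Lempert manifold (Definition \ref{D-5.1}) one by one, drawing on Theorem \ref{Lempert} together with Lempert's analysis of extremal (stationary) holomorphic disks in strongly convex domains.

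First, I would invoke Theorem \ref{Lempert} directly: since $M$ is a bounded strongly convex domain with smooth boundary, $\mathfrak{K}_M = \mathfrak{C}_M$ is a smooth strongly pseudoconvex complex Finsler metric on $M$. This immediately discharges the three sub-conditions (a), (b), (c) of Definition \ref{D-5.1}(1): smoothness on $\widetilde{M}$, complex homogeneity, and strong pseudoconvexity of the indicatrix hypersurface $S_z$ all follow from the conclusion that $\mathfrak{K}_M$ is a strongly pseudoconvex complex Finsler metric in the sense of Abate and Patrizio (see Remark \ref{R-5.1}).

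Next I would establish conditions (2) and (3) simultaneously using Lempert's existence theorem for complex geodesics. For every $z \in M$ and every non-vanishing $\omega \in T_z^{1,0}M$, Lempert's construction produces a proper holomorphic embedding $\varphi_\omega : \mathbb{D} \to M$ (a stationary disk) with $\varphi_\omega(0) = z$ and $\varphi_\omega'(0) = \lambda \omega$ for a suitable $\lambda \in \mathbb{C}^*$, such that $\varphi_\omega^* \mathfrak{K}_M$ equals the Poincaré metric $\mathcal{P}$ up to the scaling by $|\lambda|$. Because $\varphi_\omega$ realizes the infimum in the definition of $\mathfrak{K}_M$, any Kobayashi-length-minimizing chain of discs between points of $\varphi_\omega(\mathbb{D})$ must lie on $\varphi_\omega(\mathbb{D})$ itself, so $\varphi_\omega(\mathbb{D})$ is a totally geodesic complex curve. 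Condition (3) then follows at once: the induced metric is (a positive multiple of) $\mathcal{P}$, which is K\"ahler and has constant holomorphic sectional curvature $-4$.

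Finally, for condition (4), completeness of the Kobayashi distance on $M$ is standard for bounded domains: the lower bound $\mathfrak{K}_M(z;v) \gtrsim |v|/d(z,\partial M)$ forces Cauchy sequences in $(M, d_k)$ to stay in a compact subset, yielding convergence. The exponential map $\exp_z : T_z^{1,0}M \to M$ is a diffeomorphism because Lempert's theorem asserts that the family of stationary disks through $z$ provides a smooth foliation of $M \setminus \{z\}$ parametrized by the projective tangent directions, so that $v \mapsto \varphi_v(\tanh(\mathfrak{K}_M(z;v)/2) \cdot 1)$ (appropriately rescaled) is a global diffeomorphism.

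The principal obstacle is condition (2): the existence, uniqueness, and smooth dependence on $(z,\omega)$ of the stationary disks. This is precisely the core analytic content of Lempert's original work and is proved via a non-linear Riemann–Hilbert boundary-value problem on $\mathbb{D}$ whose solvability relies crucially on the strong convexity of $\partial M$. There is no shortcut around this step; however, once the stationary disks are in hand, the identifications above in (3) and (4) and the verification of the other conditions are essentially formal.
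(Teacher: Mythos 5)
The paper offers no proof of this statement---it is quoted verbatim from Spiro's paper and ultimately rests on Lempert's original work---and your outline is precisely the standard argument: conditions (1)--(4) of Definition \ref{D-5.1} are read off from Theorem \ref{Lempert} together with Lempert's stationary-disk construction, whose Riemann--Hilbert analysis you correctly identify as the irreducible analytic core that cannot be bypassed. One small caution: completeness of $d_k$ is \emph{not} ``standard for bounded domains'' in general (e.g.\ $B^2\setminus\{0\}$ is bounded but not complete hyperbolic), but the lower bound $\mathfrak{K}_M(z;v)\geq |v|/(2\,d(z,\partial M))$ that you invoke does hold for bounded \emph{convex} domains and is what actually yields condition (4).
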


\subsection{Kobayashi invariant distance}
\noindent

Now let us discuss the widely used \textit{Kobayashi distance}. Denote by  $(\mathbb{D}, \mathcal{P})$ a unit disc in $\mathbb{C}$ equipped with the Poincar\'e metric
$$
\mathcal{P}=\frac{d z \otimes d \bar{z}}{\left(1-|z|^2\right)^2}
$$
which is complete K\"ahler and has constant curvature -4. The distance function of $\mathcal{P}$ is given by
$$
d_\mathcal{P}(0, z)=\frac{1}{2} \ln \left(\frac{1+|z|}{1-|z|}\right).
$$

Given $p, q \in M$, let us consider a finite set of holomorphic maps $f_i: \mathbb{D} \rightarrow M, 1 \leq i \leq l$, such that there are points $p_1, \cdots, p_l \in \mathbb{D}$ with
$$
f_1(0)=p, f_1\left(p_1\right)=f_2(0), f_2\left(p_2\right)=f_3(0), \ldots, f_{l-1}\left(p_{l-1}\right)=f_l(0), f_l\left(p_l\right)=q.
$$
We  call such a set a \textit{ chain of holomorphic discs} from $p$ to $q$. The \textit{length} of the chain is defined by $\sum_{i=1}^l d_\mathcal{P}\left(0, p_i\right)$.

The \textit{Kobayashi distance} between $p$ and $q$, denoted by $d_k\left(p, q\right)$, is defined by the infimum of the length of any (finite) chain of holomorphic discs from $p$ to $q$. Of course it is always a finite nonnegative number, and satisfies the triangle inequality hence is a pseudo distance.
\begin{definition}\cite[p. 233, Definition 9.12]{ZFY}\label{D-2.3}
A complex manifold $M$ is said to be \textit{Kobayashi hyperbolic} if $d_k$ is a distance function, i.e., if $d_k(p,q)>0$ whenever $p \neq q \in M$.
\end{definition}
\subsection{ Curvature of Bergman metric on a bounded domain }
\noindent

Suppose that $\mathcal{D}$ is a bounded domain. For any $z \in \mathcal{D}$, let $B(z,r(z))\subset \mathcal{D} \subset B(z,R(z))$ denote the interior and exterior with the center $z$ and the radius $r(z)$ and $R(z)$, respectively. Denote by $Sec_{\mathcal{D}}(z,v)$ and $Ric_{\mathcal{D}}(z,v)$ the holomorphic sectional curvature and the Ricci curvature with respect to the Bergman metric at the point $z$ in the direction $v$. Denote by $Scal_{\mathcal{D}}(z)$ the scalar curvature of the Bergman metric at the point $z$.
\begin{definition}\cite[p. 629, Definition 7.2]{LSY}\cite[p. 548, Definition 1]{YSK}\label{D-3.1}
A complex manifold $M$ of complex dimension $n$ is called uniformly squeezing (USq) or equivalently holomorphic homogeneous regular (HHR), if there are uniform positive constants $r$ and $R$ such that for any point $p \in M$, there is a holomorphic map $f_p: M\rightarrow \mathbb{C}^n$ which satisfies

i). $f_p(p)=0$;

ii). $f_p: M \rightarrow f_p(M)$;

iii). $B^{n}(0,r) \subset f_p(M) \subset B^n(0,R)$, where $B^{n}(0,r)$ and $B^n(0,R)$ are Euclidean balls with center $0$ in $\mathbb{C}^n$.
\end{definition}
In \cite{DGZ1}, Deng et al. introduced the concept of squeezing function in order to study analytic and geometric properties for HHR or USq domains.
\begin{definition}\cite[p. 2679, Definition 1.1]{DGZ2}\label{D-3.2}
Let $\mathcal{D}$ be a bounded domain in $\mathbb{C}^n$. For $z\in \mathcal{D}$ and an open holomorphic embedding $f: \mathcal{D}\rightarrow B^n=B^n(0,1)$ with $f(z)=0$, we define
$$s_{\mathcal{D}}(z,f)=\sup\{r| B^n(0,r) \subset f(\mathcal{D})\}\quad  \text{and} \quad s_{\mathcal{D}}(z)=\sup_f{s_{\mathcal{D}}(z,f)},$$
where the supremum is taken over all holomorphic embeddings $f:\mathcal{D} \rightarrow B^{n}$ with $f(z)=0$, $B^n$ is the unit ball in $\mathbb{C}^n$, and $B^n(0,r)$ is the ball in $\mathbb{C}^n$ with center $0$ and radius $r$. As $z$ varies, we get a function $s_{\mathcal{D}}$ on $\mathcal{D}$, which is called the \textit{squeezing function} of $\mathcal{D}$.
\end{definition}
\begin{remark}\label{R-3.2}
If the squeezing constant $ \hat{s}_{\mathcal{D}}$ for $\mathcal{D}:=\inf_{z \in \mathcal{D}}s_{\mathcal{D}}(z)$ is a positive constant,  $\mathcal{D}$ is called \textit{holomorphic homogeneous regular} (HHR), or equivalently \textit{uniformly squeezing} (USq).
\end{remark}

For each point $z \in \mathcal{D}$, Deng et al. showed that there are an extremal holomorphic embedding $f:\mathcal{D} \hookrightarrow B^n$ with $f(z)=0$ and $B(0,s_{\mathcal{D}})\subset f(\mathcal{D})$ (see Theorem 2.1 in \cite{DGZ1}).  Making use of the squeezing function, Zhang \cite{ZLY} got the following theorem.

\begin{theorem}\cite[p. 1150, Theorem 1.1]{ZLY}\label{Bound}
For any point $z\in \mathcal{D}$ and $v \in T^{1,0}_z\mathcal{D}\backslash\{0\}\cong\mathbb{C}^n\backslash\{0\}$, then
\begin{align*}
2-2\frac{n+2}{n+1}s_{\mathcal{D}}^{-4n} \leq Sec_{\mathcal{D}}(z,v) \leq 2-2\frac{n+2}{n+1}s_{\mathcal{D}}^{4n},
\end{align*}
\begin{align*}
(n+1)-2(n+2)s_{\mathcal{D}}^{-2n} \leq Ric_{\mathcal{D}}(z,v) \leq (n+1)-(n+2)s_{\mathcal{D}}^{2n},
\end{align*}
\begin{align*}
n(n+1)-n(n+2)s_{\mathcal{D}}^{-2n} \leq Scal_{\mathcal{D}}(z) \leq n(n+1)-n(n+2)s_{\mathcal{D}}^{2n}.
\end{align*}
\end{theorem}

In \cite{YSK}, Yeung study uniform squeezing property on a bounded domain. And he got the following estimate on the growth of Bergman kernel.
\begin{theorem}\cite[p. 550, Corollary 3]{YSK}
Let $\mathcal{D}$ be a bounded domain with the uniform squeezing property. Denote by $d=d(z, \partial \mathcal{D})$ the Euclidean distance of $z \in \mathcal{D}$ from the boundary $\partial \mathcal{D}$ of $\mathcal{D}$. Then $K(z,z) \geq \frac{c}{d^2(-\log d)^2}$ for some constant $c$.
\end{theorem}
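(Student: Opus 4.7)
The approach is to combine the USq normalization with an $L^2$ extension argument on the pseudoconvex domain $\mathcal{D}$.

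First I would use USq to fix a holomorphic embedding $f_z \colon \mathcal{D} \to B^n(0,R)$ with $f_z(z) = 0$ and $B^n(0,r) \subset f_z(\mathcal{D}) \subset B^n(0,R)$ for uniform constants $r,R > 0$. Pulling back the Euclidean volume form via $f_z$ shows that the holomorphic function $\det Jf_z$ on $\mathcal{D}$ satisfies $\int_\mathcal{D} |\det Jf_z|^2 \, dV = \mathrm{vol}(f_z(\mathcal{D})) \leq \mathrm{vol}(B^n(0,R))$, so the extremal characterization of the Bergman kernel yields
\[
K_\mathcal{D}(z,z) \;\geq\; \frac{|\det Jf_z(z)|^2}{\mathrm{vol}(B^n(0,R))}.
\]
A Cauchy estimate applied to $f_z^{-1} \colon B^n(0,r) \to \mathcal{D}$ (whose image sits inside the bounded set $\mathcal{D}$) bounds $|\det Jf_z(z)|$ from below by a uniform positive constant, yielding $K_\mathcal{D}(z,z) \geq c_0 > 0$. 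This already handles the regime $d \geq d_0$ for any fixed $d_0 > 0$, since $c_0 \geq c/(d^2(-\log d)^2)$ there.

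For the genuinely delicate case $d \to 0$, I would use the pseudoconvexity of $\mathcal{D}$ (USq domains are taut by Yeung, hence pseudoconvex via Kerzman--Rosay) together with the Ohsawa--Takegoshi $L^2$ extension theorem. Choose a plurisubharmonic weight $\varphi$ on $\mathcal{D}$ combining (i) a logarithmic singularity at $z$ of the correct codimension, forcing the extension to pass through $(z,1)$, and (ii) a boundary term involving the PSH function $-\log\delta(w)$, where $\delta(\cdot) = \mathrm{dist}(\cdot,\partial\mathcal{D})$. The Ohsawa--Takegoshi estimate produces a holomorphic $F \in \mathcal{O}(\mathcal{D})$ with $F(z) = 1$ and a weighted $L^2$ bound whose unweighted version reads $\|F\|^2_{L^2(\mathcal{D})} \leq C\, d^2(-\log d)^2$. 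By the extremal characterization, this gives $K_\mathcal{D}(z,z) \geq 1/(C\, d^2(-\log d)^2)$, completing the proof after combining with the previous uniform bound.

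\textbf{Main obstacle.} Extracting precisely the $(-\log d)^2$ factor. A naive application of Ohsawa--Takegoshi with a weight supported only near $z$ gives either a uniform constant (no $d$-dependence at all) or the cruder scaling $d^{2n}$ coming from the volume of a cutoff's support. The logarithmic enhancement originates from integrating the plurisubharmonic weight $-\log\delta$ over the boundary layer of width $\sim d$ near $\partial\mathcal{D}$, and the correct balance between the pole at $z$ and the boundary contribution has to be calibrated carefully. The USq hypothesis enters precisely to keep the constants in the Ohsawa--Takegoshi estimate uniform in the base point $z$, depending only on the squeezing radii $r,R$ and the dimension $n$ rather than on finer features of $\partial\mathcal{D}$.
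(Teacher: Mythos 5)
The paper offers no proof of this statement: it is imported verbatim from Yeung \cite[Corollary 3]{YSK}, so your attempt can only be measured against Yeung's argument and against correctness in the abstract. Your first step is sound and is in fact the opening move of that argument: the change of variables $\int_{\mathcal D}|\det Jf_z|^2\,dV=\mathrm{vol}(f_z(\mathcal D))\le \mathrm{vol}(B^n(0,R))$, the extremal characterization, and the Cauchy estimate on $f_z^{-1}$ correctly give the uniform bound $K(z,z)\ge c_0>0$. But the whole content of the corollary is the blow-up rate as $d\to 0$, and there your proposal has a genuine gap.

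The Ohsawa--Takegoshi mechanism you sketch cannot produce that rate. To get $K(z,z)\ge c\,d^{-2}(\log d)^{-2}$ you need a competitor $F$ with $F(z)=1$ and $\|F\|^2_{L^2(\mathcal D)}\le C\,d^2(\log d)^2\to 0$. An extension from the point $z$ with a plurisubharmonic weight $\varphi$ gives $\int_{\mathcal D}|F|^2e^{-\varphi}\le C e^{-\varphi(z)}$, and to pass to the unweighted norm you must have $\varphi\le 0$ on $\mathcal D$; but then $e^{-\varphi(z)}\ge 1$, so the bound you obtain on $\|F\|^2$ never tends to zero. You flag this as the ``main obstacle,'' but the proposed remedy (calibrating a $-\log\delta$ boundary term) is not carried out and cannot work within this scheme. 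Relatedly, the role you assign to USq --- merely keeping the OT constants uniform in $z$ --- is misplaced: the punctured disc $\mathbb D^\ast$ is a bounded pseudoconvex domain with $K_{\mathbb D^\ast}=K_{\mathbb D}$ bounded near the puncture while $\delta(z)=|z|\to 0$ there, so no argument resting only on pseudoconvexity and a universal $L^2$ construction can yield the asserted asymptotics; the squeezing hypothesis must enter to exclude such ``thin'' boundary behavior, not just to normalize constants. The continuation consistent with your own first computation is the one Yeung's setup suggests: by the transformation rule $K(z,z)=|\det Jf_z(z)|^2K_{f_z(\mathcal D)}(0,0)\ge c_R\,|\det Jf_z(z)|^2$, and by Hadamard's inequality applied to the columns of $Jf_z^{-1}(0)$ with one basis vector chosen so that its image points toward the nearest boundary point $p$, the corollary reduces to showing that the invariant metric of $\mathcal D$ in the normal direction $\nu=(p-z)/|p-z|$ satisfies $\mathfrak K_{\mathcal D}(z,\nu)\ge c\,\bigl(\delta(z)\log(1/\delta(z))\bigr)^{-1}$ (the one-variable model being the punctured disc of radius $\mathrm{diam}\,\mathcal D$ about $p$); this makes one column of $Jf_z^{-1}(0)$ of length $O(d\log(1/d))$ and the rest $O(1)$, hence $|\det Jf_z(z)|\ge c\,(d\log(1/d))^{-1}$. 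That anisotropic derivative estimate, and the use of the squeezing maps to prove it, is precisely the step your proposal is missing.
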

According to the fact that any bounded strictly pseudoconvex domains with $C^2$-boundary or  bounded convex domains in $\mathbb{C}^n$ are HHR, we have the following corollary.
\begin{corollary}
Suppose that $\mathcal{D}$ is a bounded strictly pseudoconvex domain or  bounded convex domain with $C^2$-boundary in $\mathbb{C}^n$. Denote by $d=d(z, \partial \mathcal{D})$ the Euclidean distance of $z \in \mathcal{D}$ from the boundary $\partial \mathcal{D}$ of $\mathcal{D}$. Then $K(z,z) \geq \frac{c}{d^2(-\log d)^2}$ for some constant $c$.
\end{corollary}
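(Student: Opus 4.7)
The plan is to reduce the statement directly to Yeung's growth estimate for the Bergman kernel (the theorem quoted immediately before the corollary) by verifying that the domains under consideration are uniformly squeezing (equivalently, holomorphic homogeneous regular). Once uniform squeezing is established, no further work on the Bergman kernel itself is required: the inequality $K(z,z)\geq c/(d^{2}(-\log d)^{2})$ is simply quoted.

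Concretely, I would proceed in two short steps. First, I would recall that the class of domains in the statement is HHR: for bounded strictly pseudoconvex domains with $C^{2}$-boundary this is the theorem of Deng–Guan–Zhang cited earlier in the paper, while for bounded convex domains in $\mathbb{C}^{n}$ this is the theorem of Kim–Zhang. In view of Remark \ref{R-3.2}, both classes therefore have squeezing constant $\hat{s}_{\mathcal{D}}=\inf_{z\in\mathcal{D}}s_{\mathcal{D}}(z)>0$, i.e.\ they satisfy Definition \ref{D-3.1}. Second, I would apply Yeung's estimate (the theorem just quoted from \cite{YSK}) to $\mathcal{D}$: since $\mathcal{D}$ has the uniform squeezing property, there exists a constant $c>0$ such that
\begin{equation*}
K(z,z)\geq \frac{c}{d(z,\partial\mathcal{D})^{2}\bigl(-\log d(z,\partial\mathcal{D})\bigr)^{2}}
\end{equation*}
for every $z\in\mathcal{D}$. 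This is exactly the claim.

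There is essentially no obstacle; the only points to be careful about are bookkeeping issues: making sure that ``strictly pseudoconvex with $C^{2}$-boundary'' and ``bounded convex'' are each covered by a citation proving the HHR property, and that the constant $c$ produced by Yeung's theorem depends only on the uniform squeezing data of $\mathcal{D}$ (hence, through the two cited theorems, only on $\mathcal{D}$ itself and its dimension). Thus the corollary is an immediate combination of the two input theorems, and the proof reduces to a one-line chain of implications: HHR/USq $\Rightarrow$ Yeung's lower bound.
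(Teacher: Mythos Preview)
Your proposal is correct and matches the paper's approach exactly: the paper simply remarks that bounded strictly pseudoconvex domains with $C^2$-boundary (by Deng--Guan--Zhang) and bounded convex domains (by Kim--Zhang) are HHR, and then the corollary is stated as an immediate consequence of Yeung's Bergman kernel estimate. There is no additional argument in the paper beyond this one-line chain of implications.
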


\subsection{Holomorphic sectional curvature and sectional curvature on a K\"ahler manifold}
\noindent

 In this section, we introduce some relationship between holomorphic sectional curvature and sectional curvature. In K\"ahler geometry, we can easily know that holomorphic sectional curvature is 'dominated' by the sectional curvature \cite{ZFY}. It is very natural to ask the question: \textit{If holomorphic sectional curvature of a K\"ahler manifold is bounded , can we deduce that its sectional curvature is bounded}? Now, we give affirmative answer to above question. Firstly, we can refer to an exercise  in \cite[p. 189, Exercise 17]{ZFY}. In this paper, we don't need such a strong theorem. Hence, we introduce and prove the following theorem.
\begin{theorem}\label{sectional}
Suppose that $(M,h)$ is a K\"ahler manifold whose holomorphic sectional curvature is bounded, then its sectional curvature is also bounded.
\end{theorem}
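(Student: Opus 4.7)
The plan is to exploit the well-known fact, due essentially to Berger, that on a K\"ahler manifold the full Riemann curvature tensor $R$ is algebraically determined by the holomorphic sectional curvature $H(v)=R(v,Jv,Jv,v)/|v|^4$. Once we have this algebraic recovery expressed as a polarization formula, a uniform bound on $H$ will immediately force a uniform bound on $R$ as a tensor, and hence on the sectional curvature.

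Concretely, I would proceed as follows. First, I would fix a point $p\in M$ and an orthonormal basis, and record the curvature symmetries available in the K\"ahler setting: the usual symmetries of $R$, the first Bianchi identity, and the extra identities coming from $\nabla J=0$, namely $R(JX,JY,Z,W)=R(X,Y,Z,W)$ and consequently $R(X,Y,JZ,JW)=R(X,Y,Z,W)$. Second, for arbitrary tangent vectors $X,Y$ I would consider the scalar polynomial
\begin{equation*}
P(t,s)=R\bigl(tX+sY,\,J(tX+sY),\,J(tX+sY),\,tX+sY\bigr),
\end{equation*}
expand it in powers of $t,s$, and isolate the mixed coefficients. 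Using the K\"ahler symmetries, the coefficient of $t^2s^2$ can be rewritten as a fixed linear combination of $R(X,Y,Y,X)$ and $R(X,JY,JY,X)$; a second polarization with $X,Y$ replaced by $X,JY$ separates these two terms. This yields an explicit identity of the form
\begin{equation*}
R(X,Y,Y,X)=\sum_{k} c_k\,H(v_k)\,|v_k|^4,
\end{equation*}
where each $v_k$ is a real linear combination of $X,Y,JX,JY$ with absolutely bounded coefficients and the $c_k$ are universal constants.

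Third, assuming the hypothesis $|H|\leq C$, I would substitute $X,Y$ orthonormal (so every $v_k$ has norm bounded by a universal constant) and conclude $|R(X,Y,Y,X)|\leq C'$ with $C'$ depending only on $C$. Since the sectional curvature of the real $2$-plane $\operatorname{span}_{\mathbb R}\{X,Y\}$ is exactly $R(X,Y,Y,X)$ for such orthonormal pairs, this gives the required uniform bound on the sectional curvature.

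The main obstacle is simply carrying out the polarization bookkeeping cleanly: the computation is standard but the Kähler symmetry $R(X,Y,Z,W)=R(JX,JY,Z,W)$ is essential, because without it one cannot recover the ``mixed'' components $R(X,Y,Y,X)$ (with $Y$ not parallel to $JX$) from the holomorphic ones $R(v,Jv,Jv,v)$. I do not anticipate any analytic difficulty; the statement is purely pointwise and algebraic, so the theorem reduces to establishing that the linear map from curvature-type tensors satisfying the K\"ahler symmetries to their holomorphic sectional curvature is injective with a continuous inverse, which the explicit polarization formula provides.
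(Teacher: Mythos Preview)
Your proposal is correct and follows essentially the same polarization strategy as the paper's proof. The only cosmetic difference is the formalism: you work on the real tangent space with $J$ and expand $R(tX+sY,J(tX+sY),J(tX+sY),tX+sY)$, whereas the paper works on $T^{1,0}M$ and expands $R(X\pm Y,\overline{X\pm Y},X\pm Y,\overline{X\pm Y})$ and $R(X\pm iY,\overline{X\pm iY},X\pm iY,\overline{X\pm iY})$, then passes back to real vectors $u_1=X+\overline{X}$, $u_2=Y+\overline{Y}$; since multiplication by $i$ on $T^{1,0}M$ corresponds to $J$ on the underlying real tangent space, the two computations are the same identity in different notation.
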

\begin{proof}
From the definitions of holomorphic sectional curvature and sectional curvature, we know that curvatures are independent of the length of vectors. Without loss of generality, we assume that $X,Y \in T^{1,0}M$ are unit vectors, i.e., $h(X)=h(Y)=1$. We denote Riemannian curvature tensor of a K\"ahler manifold $(M,h)$ by $R$.

From properties of curvature of K\"ahler manifold $(M,h)$, we have
\begin{align}\begin{split}\label{EQ-4.2}
 &R(X+Y, \overline{X+Y}, X+Y, \overline{X+Y})+R(X-Y, \overline{X-Y}, X-Y, \overline{X-Y})\\
 &=2R(X, \overline{X}, X, \overline{X})+2R(Y, \overline{Y}, Y, \overline{Y})+8R(X, \overline{X}, Y, \overline{Y})\\
 &+2R(X, \overline{Y}, X, \overline{Y})+2R(Y, \overline{X}, Y, \overline{X}).
 \end{split}\end{align}
Similarly,
\begin{align}\begin{split}\label{EQ-4.3}
 &R(X+iY, \overline{X+iY}, X+iY, \overline{X+iY})+R(X-iY, \overline{X-iY}, X-iY, \overline{X-iY})\\
 &=2R(X, \overline{X}, X, \overline{X})+2R(Y, \overline{Y}, Y, \overline{Y})+8R(X, \overline{X}, Y, \overline{Y})\\
 &-2R(X, \overline{Y}, X, \overline{Y})-2R(Y, \overline{X}, Y, \overline{X}).
 \end{split}\end{align}
Combining ~\eqref{EQ-4.2} with ~\eqref{EQ-4.3}, we have
\begin{align}\begin{split}\label{EQ-4.4}
R(X, \overline{X}, Y, \overline{Y})&=\frac{1}{16}\Big(R(X+Y, \overline{X+Y}, X+Y, \overline{X+Y})+R(X-Y, \overline{X-Y}, X-Y, \overline{X-Y})\\
&\quad+R(X+iY, \overline{X+iY}, X+iY, \overline{X+iY})+R(X-iY, \overline{X-iY}, X-iY, \overline{X-iY})\\
&\quad-4R(X, \overline{X}, X, \overline{X})-4R(Y, \overline{Y}, Y, \overline{Y})\Big);
\end{split} \end{align}
 \begin{align}\begin{split}\label{EQ-4.5}
 R(X, \overline{Y}, X, \overline{Y})+R(Y, \overline{X}, Y, \overline{X})&=\frac{1}{4}\Big(R(X+Y, \overline{X+Y}, X+Y, \overline{X+Y})\\
 &+R(X-Y, \overline{X-Y}, X-Y, \overline{X-Y})\\
 &-R(X+iY, \overline{X+iY}, X+iY, \overline{X+iY})\\
 &-R(X-iY, \overline{X-iY}, X-iY, \overline{X-iY})\Big).
 \end{split} \end{align}
We can easily know
$$h(X+Y,X+Y)=h(X,X)+h(Y,Y)+ \mbox{Re}\{h(X,Y)+h(Y,X)\} \leq 2(\sqrt{h(X,X)}+\sqrt{h(Y,Y)})^2=8.$$
Hence $h(X+Y,X+Y)$ is not more than some constant. For the convenience of proving, we may assume $X,Y$ are orthonormal unit vector, then we have
 $$h(X+Y)=h(X)+h(Y)=2.$$
Since the holomorphic sectional curvature of the K\"ahler manifold $(M,h)$ is bounded, we suppose that
$$|R(X, \overline{X}, X, \overline{X})| \leq C,$$
where $C$ is a constant.

By the equalities \eqref{EQ-4.4} and \eqref{EQ-4.5}, we get
\begin{align}\begin{split}\label{EQ-4.6}
|R(X, \overline{X}, Y, \overline{Y})|\leq \frac{1}{16}(8+8+8+8+2+2)C=\frac{5}{2}C,
\end{split} \end{align}
\begin{align}\begin{split}\label{EQ-4.7}
| R(X, \overline{Y}, X, \overline{Y})+R(Y, \overline{X}, Y, \overline{X})|\leq \frac{1}{4}(8+8+8+8)C=8C,
\end{split} \end{align}
From properties of curvature of the K\"ahler manifold $(M,h)$, we obtain
$$R(X,Y,\cdot,\cdot)=R(\overline{X},\overline{Y},\cdot,\cdot)=R(\cdot,\cdot,X,Y)=R(\cdot,\cdot,\overline{X},\overline{Y})=0.$$
Denoted by $u_1=X+\overline{X},u_2=Y+\overline{Y}$, we have
\begin{align}\begin{split}\label{EQ-4.8}
R(u_1,u_2,u_2,u_1)&=R(X+\overline{X},Y+\overline{Y},Y+\overline{Y},X+\overline{X})\\
&= R(X, \overline{Y}, Y, \overline{X})+R(X, \overline{Y}, \overline{Y}, X)\\
&+R(\overline{X}, Y, Y, \overline{X})+R(\overline{X}, Y, \overline{Y}, X)\\
&=2R(X, \overline{X},Y, \overline{Y})-R(X, \overline{Y},X, \overline{Y})-R(Y, \overline{X},Y, \overline{X}).
\end{split} \end{align}
Substituting \eqref{EQ-4.6}-\eqref{EQ-4.7} into \eqref{EQ-4.8}, we have
$$|R(u_1,u_2,u_2,u_1)| \leq 13C.$$
Therefore, the sectional curvature of the K\"ahler manifold $(M,h)$ is also bounded.
\end{proof}

 \section{Strongly negative holomorphic sectional curvature of complex Finsler and Kobayashi-hyperbolic}
 \noindent


 In this section, we will prove the Theorem \ref{T-1.1} (i.e. Theorem \ref{MT}) and \ref{T-1.3} (i.e. Corollary \ref{Coro}). Before giving the proofs of our main results, we need some results as follows. The first one is the Schwarz lemma from a complete K\"ahler manifold into a complex Finsler manifold.
 \begin{theorem}\cite[p. 1662, Theorem 1.1 ]{NZ1} \label{SL}
 Suppose that $(M,ds_M^2)$ is a complete K\"ahler manifold with holomorphic sectional curvature bounded from below by a constant $K_1$ and sectional curvature bounded from below, while $(N,G)$ is a strongly pseudoconvex complex Finsler manifold with holomorphic sectional curvature of the Chern-Finsler connection bounded from above by a constant $K_2<0$. Then any holomorphic map $f$ from $M$ into $N$ satisfies
 \begin{equation*}
(f^*G)(z;dz) \leq \frac{K_1}{K_2}ds_M^2.
\end{equation*}
\end{theorem}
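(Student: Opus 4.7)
The plan is to adapt Yau's classical Schwarz lemma proof to the setting where the target carries a (non-Hermitian) Finsler metric. Consider the ratio
\[
u(z,v) := \frac{(f^*G)(z;v)}{ds_M^2(z;v)}, \qquad v \in T_z^{1,0}M \setminus \{0\}.
\]
Since both numerator and denominator are bi-homogeneous of degree $(1,1)$ in $v$, $u$ descends to a continuous function on the unit sphere bundle (equivalently the projectivized holomorphic tangent bundle) of $(M,ds_M^2)$. The aim is to establish the pointwise bound $u \leq K_1/K_2$ on this bundle.

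First, I would invoke the Omori--Yau maximum principle on $M$, whose hypotheses are fulfilled since $(M,ds_M^2)$ is complete with sectional curvature bounded from below (which in particular yields the required Ricci lower bound). Applied to $\log u$ lifted to the unit sphere bundle, it produces a sequence $(z_k,v_k)$ with $ds_M^2(z_k;v_k)=1$, $u(z_k,v_k) \to \sup u$, $|\nabla \log u|(z_k,v_k) \to 0$, and $\Delta \log u(z_k,v_k) \geq -1/k$, where $\Delta$ denotes the complex Laplacian of $ds_M^2$ read along the horizontal lift of $v_k$.

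Next, at each such point I would decompose $\Delta \log u = \Delta \log(f^*G) - \Delta \log(ds_M^2)$ and estimate each piece via a curvature identity. For the target, the defining relation between $\partial\bar\partial \log G$ and the Chern--Finsler curvature tensor, combined with the holomorphy of $f$, gives
\[
\Delta \log\bigl(f^*G(\,\cdot\,;v_k)\bigr)(z_k) \;\geq\; -K_G\bigl(df(v_k)\bigr)\,(f^*G)(z_k;v_k) \;\geq\; -K_2\,(f^*G)(z_k;v_k),
\]
using the global bound $K_G \leq K_2 < 0$ on $N$. For the source, the K\"ahler condition on $M$ together with the assumption that the holomorphic sectional curvature satisfies $K_M(v) \geq K_1$ yields
\[
\Delta \log\bigl(ds_M^2(\,\cdot\,;v_k)\bigr)(z_k) \;\leq\; K_1 \cdot ds_M^2(z_k;v_k).
\]
Combining these two estimates, using $ds_M^2(z_k;v_k)=1$, and letting $k\to\infty$ in the Omori--Yau output gives $0 \geq -K_2 \sup u + K_1$, which, since $K_2 < 0$, rearranges to $\sup u \leq K_1/K_2$; this is precisely the asserted inequality.

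The main obstacle will be rigorously carrying out the curvature identity for $\Delta \log(f^*G)$. Because $G$ is a Finsler metric on $N$, the pullback $f^*G$ is not a Hermitian metric on $M$ but rather a function on $T^{1,0}M$ depending nontrivially on the fibre variable; one must therefore work with the horizontal--vertical splitting of $T^{1,0}\widetilde{N}$ induced by the Chern--Finsler connection and verify that the second-order operator in $z$ correctly extracts the holomorphic sectional curvature of $G$ along $df(v_k)$. A secondary delicate point is the precise formulation of Omori--Yau on the sphere bundle rather than on $M$ itself: one must lift $v_k$ horizontally in $z$ to make it behave as a frozen direction under differentiation, and here the sectional curvature lower bound on $M$ (not only the Ricci lower bound) is what guarantees that this horizontal lift is well-behaved, explaining the two distinct curvature hypotheses on the source.
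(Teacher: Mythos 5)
This theorem is quoted in the paper from \cite[Theorem 1.1]{NZ1} without proof, so there is no in-paper argument to compare against; your sketch has to be measured against the argument in that reference. Your overall strategy --- bound the ratio $u=(f^*G)/ds_M^2$ on the projectivized tangent bundle by a maximum-principle argument, estimating $\partial\bar\partial\log$ of numerator and denominator via the holomorphic sectional curvatures of target and source --- is the right one, and you correctly flag the Finsler curvature identity for $\Delta\log(f^*G)$ as the technical heart. But two steps are genuinely missing. First, you invoke the Omori--Yau maximum principle ``on the unit sphere bundle'', which is not justified: Omori--Yau applies to functions on $M$ under a Ricci lower bound on $M$, and transplanting it to the sphere bundle would require completeness and a curvature bound for an auxiliary (Sasaki-type) metric on the total space that you neither construct nor verify. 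The proof in \cite{NZ1} follows Yau's and Royden's device instead: exhaust $M$ by geodesic balls $B(x_0,a)$, maximize $(a^2-r^2)^2u$ over the compact sphere bundle of the closed ball, and at the interior maximum point control the second derivatives of $r$ in the maximal direction by the Hessian comparison theorem. That is precisely where the hypothesis ``sectional curvature bounded from below'' enters --- Hessian (as opposed to Laplacian) comparison needs a sectional, not merely Ricci, lower bound, because the relevant second-order operator is $\partial_v\bar\partial_{\bar v}$ in a single direction rather than a trace. Your attribution of this hypothesis to the good behaviour of a horizontal lift is not where it is actually used.

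Second, the inequality $\Delta\log\bigl(f^*G(\cdot;v_k)\bigr)\ge -K_2\,(f^*G)(z_k;v_k)$ is asserted rather than proved, and you yourself call it ``the main obstacle''. For a genuinely Finsler $G$ the function $G(f(z);df(V(z)))$ depends on the fibre variable, and its $\partial\bar\partial$ produces, besides the horizontal curvature term yielding $-K_G$, mixed horizontal--vertical terms coming from the $v$-derivatives of $G$; showing that these extra terms are nonnegative (using strong pseudoconvexity of $G$, the Chern--Finsler splitting, and a careful choice of the extension of $v_k$) is the entire content of the computation in \cite{NZ1}, and without it the argument does not close. Finally, watch the signs: at near-maximum points the maximum principle yields $\Delta\log u\le 1/k$, not $\ge -1/k$, and the source estimate should read $\Delta\log\bigl(ds_M^2(\cdot;v_k)\bigr)\le -K_1$ (with $K_1<0$) in order to be consistent with your final display $0\ge -K_2\sup u+K_1$ and the conclusion $\sup u\le K_1/K_2$.
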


 \begin{theorem}\cite[p. 1676, Theorem 6.4 ]{NZ1}\label{EX}
 Suppose that $\mathcal{D}\subset \mathbb{C}^n(n\geq 2)$ is a bounded domain. Then $\mathcal{D}$ admits a non-Hermitian quadratic strongly pseudoconvex complex Finsler metric $G:T^{1,0}\mathcal{D}\rightarrow [0,+\infty)$ such that its holomorphic sectional curvature is bounded from above by a negative constant.
 \end{theorem}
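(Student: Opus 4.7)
The strategy is to give an explicit construction rather than an abstract existence argument. Since $\mathcal{D}$ is bounded, set $R := \sup_{z \in \mathcal{D}}\|z\| < +\infty$. I propose the candidate metric
\begin{align*}
G(z;v) = \|v\|^2\exp\Big(a\|z\|^2 + b\frac{|\langle z,v\rangle|^2}{\|v\|^2}\Big), \qquad (z;v) \in \widetilde{\mathcal{D}},
\end{align*}
where $a,b>0$ are positive constants to be chosen in terms of $R$, the norm $\|\cdot\|$ is the Euclidean norm on $\mathbb{C}^n$, and $\langle z,v\rangle = \sum_\alpha z^\alpha \overline{v^\alpha}$. Homogeneity $G(z;\zeta v) = |\zeta|^2 G(z;v)$ is immediate from the invariance of $|\langle z,v\rangle|^2/\|v\|^2$ under $v\mapsto \zeta v$, and smoothness on $\widetilde{\mathcal{D}}$ is clear. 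The summand $b|\langle z,v\rangle|^2/\|v\|^2$ is not bilinear in $(v,\bar v)$, so $G$ is automatically non-Hermitian quadratic. Observe that since $\mathcal{D}\subset \mathbb{C}^n$ with $n\geq 2$, the second term is not identically trivial.

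Next, I would verify strong pseudoconvexity by directly computing the Levi matrix $G_{\alpha\bar\beta} = \partial^2 G/\partial v^\alpha \partial \bar v^\beta$. Setting $\phi := a\|z\|^2 + b|\langle z,v\rangle|^2/\|v\|^2$ and $G = \|v\|^2 e^\phi$, expansion of $G_{\alpha\bar\beta}$ produces a dominant diagonal term $e^\phi \delta_{\alpha\beta}$ plus corrections built from $\phi_\alpha, \phi_{\bar\beta}, \phi_{\alpha\bar\beta}$; by the Cauchy--Schwarz inequality $|\langle z,v\rangle|^2 \leq \|z\|^2\|v\|^2$ and since $\|z\|\leq R$, each correction is uniformly bounded in terms of $a,b,R$. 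Choosing $b$ small enough relative to $R$ then makes $(G_{\alpha\bar\beta})$ positive definite on the whole slit bundle.

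The heart of the argument is the holomorphic sectional curvature computation via $K_G(v) = \frac{2}{G(v)^2}\langle \Omega(\chi,\bar\chi)\chi,\chi\rangle_v$ with $\chi = v^\alpha\delta_\alpha$ the complex radial horizontal field. The contribution from $a\|z\|^2$ produces a leading term of the form $-2a\,e^{-\phi}$ plus bounded lower-order pieces, while the $b$-contribution yields terms bounded in magnitude by a constant times $bR^2 e^{-\phi}$. Because $\phi \leq (a+b)R^2$ on $\mathcal{D}$, one has $e^{-\phi}\geq e^{-(a+b)R^2}$, so by first fixing $a$ sufficiently large and then taking $b$ sufficiently small (so that both strong pseudoconvexity from the previous step and curvature negativity are preserved), one can absorb the positive contributions and deduce $K_G(v)\leq -K$ for a uniform positive constant $K$ depending only on $a,b,R,n$.

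The main obstacle is the explicit Chern--Finsler curvature calculation: one must simultaneously track the horizontal derivatives $\delta_\alpha(\log G)$ along $\chi$ and the contribution of the non-Hermitian cross term $b|\langle z,v\rangle|^2/\|v\|^2$, which produces genuine off-diagonal pieces with no Hermitian counterpart. The delicate balance is that $b$ must be taken small (for both positive-definiteness of the Levi matrix and to control the positive curvature contribution), while $a$ must be large enough to dominate but not so large that $e^{(a+b)R^2}$ destroys the uniformity of the negative upper bound. Proposition \ref{P-2.1} offers an alternative route via realizing $K_G$ as a supremum of Gaussian curvatures of pulled-back metrics through holomorphic disks, but direct Chern--Finsler computation seems more amenable to producing the uniform negative bound.
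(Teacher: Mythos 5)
Your proposal is correct and coincides with the paper's own treatment: the paper does not reprove this cited result but records in Remark \ref{R-6.1} exactly the metric you propose, $G(z;v)=\|v\|^2\exp\bigl(a\|z\|^2+b|\langle z,v\rangle|^2/\|v\|^2\bigr)$ with $a>0$ and $b$ small relative to $\sup_{z\in\mathcal{D}}\|z\|$, verified via the Levi matrix (Proposition 2.9 of \cite{ZCP}) and the Chern--Finsler curvature computation of \cite{NZ1}, which is precisely your outline.
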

\begin{remark}\label{R-6.1}
Without lose of generality, we assume that $\mathcal{D}$ contains the origin $0\in\mathbb{C}^n$. Set
 $$M_0:=\sup_{z\in \mathcal{D}}\{\|z\|\}>0,$$
here $\|\cdot\|$ denotes the canonical complex Euclidean norm of $z\in \mathcal{D}$. For any $z\in \mathcal{D}, 0 \neq v\in T_z^{1,0}\mathcal{D}=\mathbb{C}^n$, we denote
$$r:=\langle v, v \rangle=\|v\|^2,\quad t:=\langle z, z \rangle=\|z\|^2, \quad s:=\frac{|\langle z,v\rangle|^2}{r},$$
where $\langle\cdot,\cdot\rangle$ denotes the canonical complex Euclidean inner product in $\mathbb{C}^n$. For every constants $a,b$ satisfying $a>0$ and $0<b<\frac{1}{M_0}$, we define
\begin{equation*}
G(z,v):=r\phi(t,s)\quad\mbox{with}\quad \phi(t,s)=e^{at+bs},\quad \forall z\in \mathcal{D}, v\in T_z^{1,0}\mathcal{D}.\label{G}
\end{equation*}
It is clear that $G:T^{1,0}\mathcal{D}\rightarrow [0,+\infty)$ is a non-Hermitian quadratic strongly pseudoconvex complex Finsler metric such that its holomorphic sectional curvature is bounded from above by a negative constant from Theorem 6.4 in \cite{NZ1} and Proposition 2.9 in \cite{ZCP}. If $b$ is equal to $0$ and $a$ is greater than $0$, $G$ is a Hermitian metric such that its holomorphic sectional curvature is bounded from above by a negative constant.
\end{remark}
In \cite{DGZ2}, Deng et al. proved that any bounded strictly pseudoconvex domains with $C^2$-boundary in $\mathbb{C}^n$ are HHR. In \cite{KZ}, Kim and Zhang showed that all bounded convex domains in $\mathbb{C}^n$ are HHR. Combining Theorem \ref{Bound}, the above results and Remark \ref{R-3.2}, we obtain the following Lemma.

\begin{lemma}\label{L-6.1}
Suppose that $\mathcal{D}$ is a bounded strictly pseudoconvex domain  with $C^2$-boundary or bounded convex domain in $\mathbb{C}^n$. Then holomorphic sectional curvature, Ricci curvature, scalar curvature of the Bergman metric on $\mathcal{D}$ are bounded.
\end{lemma}
\begin{theorem}\label{Ohsawa} \cite[p. 238, Theorem 1]{Ohsawa} \cite[p. 548, Theorem 1]{YSK}
Every bounded pseudoconvex domain in $\mathbb{C}^n$ with a $C^1$-boundary or domain with the uniform squeezing property is complete with respect to its Bergman metric.
\end{theorem}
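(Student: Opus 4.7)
\begin{preuve}
The plan is to treat the two hypotheses separately but reduce both to a common sufficient condition. First I would invoke Kobayashi's criterion for Bergman completeness: a bounded domain $\mathcal{D}$ is Bergman complete provided that for every sequence $\{z_j\} \subset \mathcal{D}$ with $z_j \to \partial \mathcal{D}$ and every $f \in A^2(\mathcal{D}) := L^2(\mathcal{D}) \cap \mathcal{O}(\mathcal{D})$, one has $|f(z_j)|^2 / K_{\mathcal{D}}(z_j, z_j) \to 0$. The theorem then reduces to verifying this criterion in each of the two cases.

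For the pseudoconvex case with $C^1$-boundary, I would appeal to H\"ormander's $L^2$-estimate for $\bar\partial$ with a carefully chosen plurisubharmonic weight. Given $z_0$ near the boundary, one combines a singular weight of the form $(n+\epsilon)\log|z-z_0|^2$, cut off away from $z_0$, with a bounded Diederich--Forn\ae ss-type plurisubharmonic exhaustion adapted to the $C^1$-pseudoconvex boundary (this is where the $C^1$ regularity is used). Solving $\bar\partial u = \bar\partial\chi$ for a cutoff $\chi$ concentrated near $z_0$ produces an $A^2$-function $f_{z_0}:=\chi - u$ with $f_{z_0}(z_0) = 1$ and $\|f_{z_0}\|_{L^2}$ controlled by the weight; this forces $K_{\mathcal{D}}(z_0,z_0) \to \infty$ at a quantified rate as $z_0 \to \partial\mathcal{D}$. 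A localization argument, exploiting the reproducing property to peel off the mass of an arbitrary $f\in A^2(\mathcal{D})$ away from a small neighbourhood of $z_0$, then promotes the kernel blow-up to the full Kobayashi criterion.

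For the USq case the plan is more geometric. By Definition \ref{D-3.1}, for each $p \in \mathcal{D}$ there is a holomorphic embedding $f_p$ with $f_p(p)=0$ and $B^n(0,r) \subset f_p(\mathcal{D}) \subset B^n(0,R)$. Using the extremal characterisation of the Bergman kernel and its biholomorphic transformation law, together with the monotonicity $K_{D_1}(0,0) \geq K_{D_2}(0,0)$ whenever $D_1 \subset D_2$, one sandwiches $K_{\mathcal{D}}(p,p)\,|\det df_p(p)|^{-2}$ between the Bergman kernels of $B^n(0,R)$ and $B^n(0,r)$ at the origin, whose values and derivatives are explicit. The same extremal argument gives analogous two-sided comparisons for the Bergman metric at $p$, which translate into a uniform lower bound on the Bergman length of any curve leaving compact subsets of $\mathcal{D}$; the Kobayashi criterion then follows by combining this kernel blow-up with a mass-decay argument for $A^2$-functions transported through the maps $f_{z_j}$.

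The main obstacle is the pseudoconvex case: building the plurisubharmonic weight so that H\"ormander's estimate simultaneously yields pointwise non-degeneracy at $z_0$ and sufficient decay of the resulting $L^2$-norm as $z_0$ approaches the boundary. This is where the Diederich--Forn\ae ss construction of a bounded plurisubharmonic exhaustion is indispensable, and why mere pseudoconvexity without boundary regularity would not suffice for the argument I have in mind. A secondary subtlety is ruling out that any individual $f \in A^2(\mathcal{D})$ accumulates mass at the boundary faster than $\sqrt{K_{\mathcal{D}}(z,z)}$ grows; this is needed to promote the kernel blow-up to the full Kobayashi criterion and therefore to Bergman completeness itself.
\end{preuve}
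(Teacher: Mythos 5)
The paper does not actually prove this statement: it is imported with citations, the $C^1$-pseudoconvex case being Ohsawa's theorem (as recorded in Kobayashi's book, Theorem 4.10.24) and the uniformly squeezing case being Yeung's Theorem 1. So your sketch must be measured against those sources. Your reduction to Kobayashi's criterion is the right frame for the first case, but two of your steps hide the real content. First, the Diederich--Forn\ae ss construction of a bounded plurisubharmonic exhaustion requires a $C^2$ boundary; for a merely $C^1$ boundary the hyperconvexity you need is the Kerzman--Rosay theorem. More seriously, even granting a bounded exhaustion and the H\"ormander construction of functions $f_{z_0}$ forcing $K_{\mathcal{D}}(z_0,z_0)\to\infty$, the passage from kernel blow-up to the full Kobayashi criterion for an \emph{arbitrary fixed} $f\in A^2(\mathcal{D})$ is precisely the hard step --- it is the content of the B{\l}ocki--Pflug/Herbort theorem that hyperconvex domains are Bergman complete, proved via Donnelly--Fefferman-type estimates against the pluricomplex Green function (or, in Ohsawa's original 1981 argument, a genuine localization of the kernel). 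You flag this promotion as a ``secondary subtlety,'' but it is the main difficulty, and the kernel blow-up by itself does not deliver it; as written this is a genuine gap.

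For the uniformly squeezing case your sandwich of $K_{\mathcal{D}}(p,p)\,|\det df_p(p)|^{-2}$ between the kernels of the two balls is correct, but it only shows that the growth of $K_{\mathcal{D}}$ is governed by $|\det df_p(p)|$, and the proposed ``mass-decay argument for $A^2$-functions transported through the maps $f_{z_j}$'' is not routine because the embeddings vary with the point. Yeung's proof avoids Kobayashi's criterion here entirely: the transformation rule together with monotonicity under the inclusions $B^n(0,r)\subset f_p(\mathcal{D})\subset B^n(0,R)$ sandwiches the Bergman \emph{metric} of $\mathcal{D}$ at $p$, uniformly in $p$, between constant multiples of the Carath\'eodory (equivalently Kobayashi) metric, and a USq domain is elementarily Carath\'eodory complete since the distance from $p$ to the complement of $f_p^{-1}(B^n(0,r/2))$ is bounded below by a uniform positive constant. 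Completeness of the Bergman metric then follows by comparison of distances. I would recommend taking that route for the second case and citing Kerzman--Rosay plus B{\l}ocki--Pflug/Herbort (or Ohsawa's localization) for the first, rather than attempting the $\bar\partial$ construction from scratch.
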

\begin{theorem}\label{MT}
For a suitable choice of positive constant $C$, suppose that $\mathcal{D}$ is a bounded strictly pseudoconvex domain  with $C^2$-boundary or bounded convex domain in $\mathbb{C}^n$. Suppose that $\mathcal{D}$ admits a strongly pseudoconvex complex Finsler metric $G:T^{1,0}\mathcal{D}\rightarrow [0,+\infty)$ such that its holomorphic sectional curvature is bounded from above by a negative constant $-K$. Then $H=CG+g_B$ and the Bergman metric $g_B$ are equivalent. What's more, $H=CG+g_B$ is complete.
\end{theorem}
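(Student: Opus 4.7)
The plan is to show that $G$ is pointwise dominated by a constant multiple of the Bergman metric $g_B$, from which the equivalence of $H=CG+g_B$ with $g_B$ and the completeness of $H$ both follow immediately. The key tool is the Schwarz lemma from a complete K\"ahler manifold into a complex Finsler manifold (Theorem \ref{SL}), applied to the identity map $\mathrm{id}:(\mathcal{D},g_B)\to(\mathcal{D},G)$.

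First I would verify that $(\mathcal{D},g_B)$ satisfies the source hypotheses of Theorem \ref{SL}. Completeness of $g_B$ is provided by Theorem \ref{Ohsawa}, since a bounded strictly pseudoconvex domain with $C^2$-boundary is pseudoconvex with $C^1$-boundary, while a bounded convex domain is HHR. Theorem \ref{Holomorhic} yields a two-sided bound $C_1\leq Sec_{\mathcal{D}}(z,v)\leq C_2$ on the holomorphic sectional curvature of $g_B$, with $C_1<0$. Since $(\mathcal{D},g_B)$ is K\"ahler, Theorem \ref{sectional} then upgrades this to a bound on the full sectional curvature, which in particular is bounded from below. On the target side, $G$ is strongly pseudoconvex with holomorphic sectional curvature bounded above by $-K<0$ by hypothesis.

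Applying Theorem \ref{SL} to $\mathrm{id}:(\mathcal{D},g_B)\to(\mathcal{D},G)$ then yields
\begin{equation*}
G(z;v)\;\leq\;\frac{C_1}{-K}\,g_B(z;v)\;=\;\frac{|C_1|}{K}\,g_B(z;v),\qquad \forall\,(z,v)\in T^{1,0}\mathcal{D}.
\end{equation*}
Since $CG\geq 0$, it follows that, for any positive constant $C$,
\begin{equation*}
g_B(z;v)\;\leq\; H(z;v)\;\leq\;\Big(1+\frac{C|C_1|}{K}\Big)g_B(z;v),
\end{equation*}
which is the required equivalence of $H$ with $g_B$. Completeness of $H$ follows at once: the pointwise inequality $H\geq g_B$ passes to the induced Finsler distances, so any $d_H$-Cauchy sequence is $d_{g_B}$-Cauchy and hence convergent by completeness of $g_B$; the two-sided comparison guarantees that $d_H$ and $d_{g_B}$ induce the same topology on $\mathcal{D}$, so the limit is also a $d_H$-limit.

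The one potentially delicate point is the curvature bookkeeping needed to meet the hypotheses of Theorem \ref{SL}: the Schwarz lemma requires a lower bound on the \emph{full} sectional curvature of the source metric, not merely on its holomorphic sectional curvature. This gap is bridged precisely by Theorem \ref{sectional}, which says that on a K\"ahler manifold a bound on the holomorphic sectional curvature forces a bound on the sectional curvature. Once this translation is made, the rest reduces to a single application of the Schwarz lemma followed by a routine domination argument.
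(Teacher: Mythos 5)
Your proof is correct, but it takes a genuinely different route from the paper's. The paper applies the Schwarz lemma (Theorem \ref{SL}) to the identity map with \emph{target} $(\mathcal{D},H)$: it first shows, via the supremum characterization of holomorphic sectional curvature (Proposition \ref{P-2.1}), that $K_H(v)\leq -\frac{K}{C}+B$, which is negative precisely when $0<C<\frac{K}{B}$ --- this is where the ``suitable choice of positive constant $C$'' in the statement comes from --- and only then deduces $H\leq C_2 g_B$. You instead apply Theorem \ref{SL} with target $(\mathcal{D},G)$, using only the hypothesis $K_G\leq -K$, and obtain $G\leq \frac{|C_1|}{K}\,g_B$ directly; the two-sided bound
\begin{equation*}
g_B\;\leq\;H\;\leq\;\Bigl(1+\tfrac{C|C_1|}{K}\Bigr)g_B
\end{equation*}
then holds for \emph{every} $C>0$, so your argument is both shorter and slightly stronger, and it sidesteps the somewhat delicate curvature computation for the sum $CG+g_B$. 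The source-side bookkeeping is identical in both arguments: completeness of $g_B$ from Theorem \ref{Ohsawa}, the two-sided holomorphic sectional curvature bound from Theorem \ref{Holomorhic}, upgraded to a full sectional curvature bound by Theorem \ref{sectional}, exactly as you note. The one thing your route does not deliver is that $K_H$ itself is bounded above by a negative constant; the theorem as stated does not claim this, but that extra conclusion is what the paper later feeds into Theorem \ref{completeKobayashi} to obtain Corollary \ref{Coro}, so if you wanted to recover the corollary you would still need the paper's estimate of $K_H$ and the attendant restriction on $C$.
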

\begin{proof}  $ \forall v \in T^{1,0}_z\mathcal{D}\backslash\{0\}\cong\mathbb{C}^n\backslash\{0\}$. By assumptions, we observed that the holomorphic sectional curvature $Sec_{\mathcal{D}}(z,v)$ along a non-zero tangent vector $v$ of the Bergman metric $g_B$ on $\mathcal{D}$ is bounded.  By Lemma \ref{L-6.1}, we have
\begin{equation}\label{EQ-6.3}
-A \leq Sec_{\mathcal{D}}(z,v) \leq B,
\end{equation}
where $A, B$ are positive constants.

Combine the equality $H=CG+g_B$ with Propositions  \ref{L-2.2}, \ref{L-2.1}, \ref{P-2.1} , we have
\begin{align}\begin{split}\label{EQ-6.4}
K_H(v)=\sup_{\varphi}\{K(\varphi^*H)(0)\}&=\sup_{\varphi} \{K(\varphi^*(CG)+\varphi^*g_B)(0)\}\\
&\leq\sup_{\varphi} \{\frac{1}{C}K(\varphi^*G)(0)+K(\varphi^*g_B)(0)\}\\
&\leq \frac{1}{C}\sup_{\varphi}\{K(\varphi^*G)(0)\}+\sup_{\varphi}\{K(\varphi^*g_B)(0)\}\\
&= \frac{1}{C}K_G(v)+Sec_{\mathcal{D}}(z,v),
\end{split}\end{align}
where $K_H(v), K_G(v)$ are the holomorphic sectional curvature along a non-zero tangent vector $v$ of the complex Finsler metrics $H, G$ respectively. Here the supremum is taken with respect to the family of all holomorphic maps $\varphi: \mathbb{D} \rightarrow M$ with $\varphi(0)=z$ and $\varphi'(0)=\lambda v$ for some $\lambda \in \mathbb{C}\backslash \{0\}$.

By the curvature condition of $G$, \eqref{EQ-6.3}, \eqref{EQ-6.4}, we obtain
\begin{equation}\label{EQ-6.5}
K_H(v) \leq -\frac{K}{C}+B.
\end{equation}
We take $0<C<\frac{K}{B}$, we know that the holomorphic sectional curvature $K_H(v)$ is bounded from above by a negative constant $-\frac{K}{C}+B$.

 Combining with Theorem \ref{sectional}, we know that the sectional curvature of the Bergman metric $g_{B}$ on $\mathcal{D}$ is also bounded. By lemma \ref{L-6.1}, we know that the holomorphic sectional curvature of the Bergman metric on $\mathcal{D}$ is bounded from below by a negative constant $-C_1$. Applying Theorem \ref{SL} to the case where $f$ is an identity holomorphic map $id$, we have
\begin{equation}\label{in}
H=id^*H \leq \frac{-C_1}{-\frac{K}{C}+B}g_{B}=C_2g_B, C_2>0.
\end{equation}
Since $H=CG+g_B$ and $C>0$, we know that
\begin{equation}\label{EQ-6.7}
g_B\leq H.
\end{equation}
From inequalities \eqref{in} and \eqref{EQ-6.7}, $H$ is equivalent to the Bergman metric $g_B$. By Theorem \ref{Ohsawa}, we know that $H$ is a complete complex Finsler metric.
\end{proof}
\begin{remark}
If $G$ comes from a Hermitian metric, this theorem still holds. Hence, if $\mathcal{D}$ admits a complete  Hermitian metric $h$ such that its holomorphic sectional curvature is bounded from above by a negative constant, $(\mathcal{D},H)$ is a complete Hermitian manifold.
\end{remark}

\begin{theorem} \cite[p. 112, Theorem 3.7.1 ]{kobayashi} \label{completeKobayashi}
Suppose that $(M,G)$ is a complete strongly pseudoconvex complex Finsler manifold whose holomorphic sectional curvature is bounded from above by a negative constant $-B<0$. Then $M$ is a complete Kobayashi-hyperbolic.
\end{theorem}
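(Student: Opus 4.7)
The plan is to reduce the theorem to the Schwarz lemma (Theorem \ref{SL}) applied to holomorphic maps from discs into $M$. For each $R>0$, the disc $\mathbb{D}_R$ equipped with its Poincar\'e metric $P_R$ of constant curvature $-4$ is a complete K\"ahler manifold whose holomorphic sectional curvature and sectional curvature both equal $-4$. Thus Theorem \ref{SL} applies with source $(\mathbb{D}_R,P_R)$, $K_1=-4$, target $(M,G)$, and $K_2=-B<0$: for any holomorphic $\varphi:\mathbb{D}_R\to M$ one has the pointwise Finsler comparison
\[
(\varphi^*G)(z;dz)\;\leq\; \tfrac{4}{B}\,P_R(z;dz).
\]

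Specializing this inequality at $z=0$ to the tangent vector $\partial/\partial z$ yields $G(\varphi(0),\varphi'(0))\leq 4/(BR^{2})$, since $P_R(0;\partial/\partial z)=1/R^{2}$. For any fixed $(z,v)\in T^{1,0}M$ with $v\neq 0$, every admissible $\varphi$ in the definition of the Kobayashi pseudometric (i.e.\ $\varphi(0)=z$ and $\varphi_{*}(\partial/\partial z|_{0})=v$) therefore satisfies $R\leq 2/\sqrt{B\,G(z,v)}$, and taking the infimum over such $R$ gives
\[
\mathfrak{K}_M(z,v)\;\geq\; \tfrac{\sqrt{B}}{2}\sqrt{G(z,v)}\;=\;\tfrac{\sqrt{B}}{2}\,F(z,v),\qquad F:=\sqrt{G}.
\]
Strong pseudoconvexity of $G$ ensures $F(z,v)>0$ whenever $v\neq 0$, so $\mathfrak{K}_M$ is a nondegenerate Finsler pseudo-norm. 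Integrating along chains of holomorphic discs, the same pointwise comparison upgrades to the distance inequality $d_{k}\geq \frac{\sqrt{B}}{2}\,d_{F}$, where $d_{F}$ is the distance induced by $F$ on $M$. In particular $d_{k}$ separates points, so $M$ is Kobayashi-hyperbolic.

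For completeness, the distance comparison shows that any $d_{k}$-Cauchy sequence in $M$ is also $d_{F}$-Cauchy, and by the assumed completeness of $(M,G)$ it converges in $M$ in the $d_{F}$-topology; the same comparison then promotes this convergence back to the $d_{k}$-topology. Hence $(M,d_{k})$ is complete, so $M$ is a complete Kobayashi-hyperbolic manifold. The main technical obstacle I anticipate lies not in the outline but in applying Theorem \ref{SL} uniformly across all holomorphic discs: one must handle the possible critical points of $\varphi$ where $\varphi^{*}G$ degenerates, and one must verify that the constants $K_{1}=-4$ and $K_{2}=-B$ are independent of $R$. The former is standard (work on the open set where $\varphi'\neq 0$ and pass to a limit, or appeal to the statement of Theorem \ref{SL} which already covers this), while the latter is automatic since rescaling the disc preserves the constant curvature of its Poincar\'e metric.
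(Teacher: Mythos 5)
Your proof is correct and follows essentially the same route as the paper's: apply the Schwarz lemma (Theorem \ref{SL}) with a Poincar\'e disc as source to obtain $\mathfrak{K}_M^2(z,v)\ge \tfrac{B}{4}\,G(z,v)$, conclude hyperbolicity, and then pass to the integrated distances to transfer completeness from $(M,d_G)$ to $(M,d_k)$; the only cosmetic difference is that you vary the radius $R$ of the source disc, whereas the paper fixes the unit disc and measures tangent vectors in the Poincar\'e norm. One small slip in your last paragraph: the comparison $d_k\ge \tfrac{\sqrt{B}}{2}\,d_F$ points the wrong way to ``promote'' $d_F$-convergence back to $d_k$-convergence --- for that you need the standard fact that $d_k$ is continuous for the manifold topology (locally dominated by a Euclidean distance in a chart), a point the paper's own proof also leaves implicit.
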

\begin{proof}
Let $\phi \in \text{Hol}(\mathbb{D},M)$ such that $\phi(0)=z$, and $\phi_*(\xi)=v$. It follows from Schwarz lemma that
 $$ \phi^*G \leq \frac{4}{B}\omega_{\mathcal{P}} \quad \text{on} \quad \mathbb{D},$$
 where $\omega_{\mathcal{P}}= \frac{1}{(1-|z|^2)^2}dz \otimes d\overline{z}$. It follows that
\begin{equation*}
G(z,v)=(\phi^*G)(0,\xi) \leq \frac{4}{B}\omega_{\mathcal{P}}(0,\xi)=\frac{4}{B}|\xi|^2_{\mathbb{C}}.
\end{equation*} \label{A}
Hence, $|\xi|^2_{\mathbb{C}} \geq \frac{B}{4}G(z,v)$. By the definition of  Kobayashi metric, we obtain
\begin{equation} \label{K}
\mathfrak{K}^2_{M}(z,v) \geq \frac{B}{4}G(z,v)>0.
\end{equation}
Therefore, $M$ is a Kobayashi-hyperbolic.

By the equality \eqref{K}, we obtain
\begin{equation}\label{EQ-6.10}
d_K \geq \sqrt{\frac{B}{4}}d_G,
\end{equation}
where $d_K$ and $d_G$ are Kobayashi distance and the integrated form of $G$ on $M$, respectively.
Let ${z_n}$ be a Cauchy sequence in $(M,d_K)$.  By the equality \eqref{EQ-6.10}, ${z_n}$ is a Cauchy sequence in $(M,d_G)$. Since $(M,d_G)$ is complete, there is a $z_0 \in M$ such that $\lim_{n \rightarrow \infty} z_n =z_0$. Then $(M, d_K)$ is complete. Therefore, we know that $M$ is a complete Kobayashi-hyperbolic.
\end{proof}
\begin{remark}
If $(M,G)$ is from a Hermitian manifold, Theorem \ref{completeKobayashi} reduces Theorem 4.11 in Chapter 4 in \cite{kobayashi2}.
\end{remark}
As a simple application of Theorems \ref{EX}, \ref{MT}, \ref{completeKobayashi} and Remark \ref{R-6.1}, we obtain the following corollary.
\begin{corollary}\label{Coro}
Suppose that $\mathcal{D}$ is a bounded strictly pseudoconvex domain  with $C^2$-boundary or bounded convex domain in $\mathbb{C}^n$. Then $\mathcal{D}$ admits a complete strongly pseudoconvex complex Finsler metric $H:T^{1,0}\mathcal{D}\rightarrow [0,+\infty)$ such that its holomorphic sectional curvature is bounded from above by a negative constant. What's more, $\mathcal{D}$ is a complete Kobayashi-hyperbolic.
\end{corollary}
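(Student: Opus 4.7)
The plan is to chain together the three named theorems in order, using the curvature estimate from Theorem \ref{MT} as the bridge between the existence of a negatively curved Finsler metric on $\mathcal{D}$ and complete Kobayashi-hyperbolicity. First I would invoke Theorem \ref{EX} (together with Remark \ref{R-6.1}) to produce a concrete non-Hermitian quadratic strongly pseudoconvex complex Finsler metric $G:T^{1,0}\mathcal{D}\to[0,+\infty)$ whose holomorphic sectional curvature is bounded above by some negative constant $-K<0$. Since $\mathcal{D}$ is either a bounded strictly pseudoconvex domain with $C^2$-boundary or a bounded convex domain, it falls into the class where Theorem \ref{EX} applies.

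Next I would feed this $G$ into Theorem \ref{MT}. Choose $C>0$ small enough that the bound $-K/C+B<0$ from inequality \eqref{EQ-6.5} holds, where $B$ is the upper bound of the Bergman holomorphic sectional curvature supplied by Theorem \ref{Holomorhic}. Then $H:=CG+g_B$ is a strongly pseudoconvex complex Finsler metric (it is a positive combination of a strongly pseudoconvex Finsler metric and a K\"ahler metric, so the Levi form of $H$ is positive definite on $\widetilde{\mathcal{D}}$), its holomorphic sectional curvature is bounded above by $-K/C+B<0$ by the curvature decomposition \eqref{EQ-6.4}, and it is equivalent to the Bergman metric $g_B$. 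Since $g_B$ is complete by Theorem \ref{Ohsawa}, the equivalence forces $H$ to be complete as well. This already gives the first assertion of the corollary.

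Finally, having $(\mathcal{D},H)$ as a complete strongly pseudoconvex complex Finsler manifold with holomorphic sectional curvature bounded above by a negative constant, I would apply Theorem \ref{completeKobayashi} directly to conclude that $\mathcal{D}$ is complete Kobayashi-hyperbolic. This closes the argument.

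The main obstacle is only a bookkeeping one: one must verify that the hypotheses of Theorem \ref{MT} are genuinely met, namely that $\mathcal{D}$ falls in the class for which the Bergman holomorphic sectional curvature is two-sidedly bounded (Theorem \ref{Holomorhic}) and for which the Bergman metric is complete (Theorem \ref{Ohsawa}); both are guaranteed by the assumption that $\mathcal{D}$ is a bounded strictly pseudoconvex domain with $C^2$-boundary or a bounded convex domain, since such domains are HHR. No new estimates are needed beyond the ones already proved in Section~6, so the proof is indeed a short composition of the cited results.
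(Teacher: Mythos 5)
Your proposal is correct and follows exactly the paper's own route: the paper derives Corollary \ref{Coro} precisely as "a simple application of Theorems \ref{EX}, \ref{MT}, \ref{completeKobayashi} and Remark \ref{R-6.1}," which is the same chain you describe, including reading the negative upper curvature bound for $H$ off inequality \eqref{EQ-6.5} inside the proof of Theorem \ref{MT}. No further comment is needed.
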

\section{Uniform equivalence of Kobayashi metric}
\noindent

In this section, we prove Theorem \ref{T-1.6} of this paper. Firstly, we prove the following Theorem \ref{T-7.1}.


 \begin{theorem}\label{T-7.1}
 Suppose that $M$ is a Lempert manifold. And let $(M,h)$ be a complete K\"ahler manifold whose holomorphic sectional curvature $K_h$ satisfies $-A \leq K_h \leq -B$ for some positive constants $A$ and $B$. Then Kobayashi metric $\mathfrak{K}_M$ satisfies
 $$ C^{-1} h(z,v) \leq\mathfrak{K}_M^2(z,v) \leq Ch(z,v), \forall z \in M, \forall v \in T^{1,0}_zM.$$
 Here $C>0$ is depending only $A, B$, and independent of complex dimension $n$ of complex manifold $M$.
 \end{theorem}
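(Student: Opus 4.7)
The plan is to establish the two inequalities separately, combining the Finsler Schwarz lemma (Theorem \ref{SL}) applied to the identity map with a classical Ahlfors-Schwarz comparison on the disc. The crucial observation is that on a Lempert manifold the Kobayashi metric is automatically a smooth, strongly pseudoconvex complex Finsler metric with constant holomorphic sectional curvature $-4$ (Remark \ref{R-5.2}), so it is itself an admissible target for Theorem \ref{SL}.

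For the upper estimate $\mathfrak{K}_M^2 \leq C\, h$, I would apply Theorem \ref{SL} to the identity map $id \colon (M,h) \to (M,\mathfrak{K}_M^2)$. On the source side, $(M,h)$ is complete K\"ahler with $K_h \geq -A$; since $K_h$ is two-sided bounded by hypothesis, Theorem \ref{sectional} yields a bound on the sectional curvature of $h$, so the remaining source hypothesis of Theorem \ref{SL} (sectional curvature bounded below) is automatic. On the target side, the Lempert hypothesis and Remark \ref{R-5.2} make $\mathfrak{K}_M^2$ a strongly pseudoconvex complex Finsler metric whose holomorphic sectional curvature of the Chern-Finsler connection is the constant $-4$. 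Applying the lemma with $K_1 = -A$ and $K_2 = -4$ gives
\[
\mathfrak{K}_M^2(z,v) \;=\; (id^{*}\mathfrak{K}_M^2)(z,v) \;\leq\; \tfrac{-A}{-4}\, h(z,v) \;=\; \tfrac{A}{4}\, h(z,v).
\]

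For the lower estimate $h \leq C\, \mathfrak{K}_M^2$, I would rerun the Ahlfors-Schwarz argument used in the proof of Theorem \ref{completeKobayashi}, with $h$ playing the role of $G$. Given any $\phi \in \mathrm{Hol}(\mathbb{D}, M)$ with $\phi(0)=z$ and $\phi_{*}(\xi)=v$, the bound $K_h \leq -B$ yields $\phi^{*}h \leq \tfrac{4}{B}\, \omega_{\mathcal{P}}$; evaluating at $0$ and then taking the infimum over all admissible pairs $(\phi, \xi)$ produces $h(z,v) \leq \tfrac{4}{B}\, \mathfrak{K}_M^2(z,v)$. Setting $C := \max\{A/4,\, 4/B\}$ then gives the required two-sided inequality, and since $C$ depends only on $A$, $B$ and the universal constant $4$, it is independent of $\dim_{\mathbb{C}} M$.

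The main obstacle I anticipate is precisely verifying the target hypothesis of Theorem \ref{SL}: that $\mathfrak{K}_M^2$ is smooth on $\widetilde{M}$, strongly pseudoconvex, and has a well-defined holomorphic sectional curvature of the Chern-Finsler connection bounded above by a negative constant. This regularity is delicate for the Kobayashi metric in general, but it is exactly what the Lempert manifold hypothesis supplies via Remark \ref{R-5.2}; without it the identity-map trick in the upper estimate collapses. This explains why the statement is phrased for Lempert manifolds rather than for arbitrary Kobayashi-hyperbolic K\"ahler manifolds, and why the anticipated corollary for bounded strongly convex domains with smooth boundary (Theorem \ref{T-1.6}) will rely critically on the fact that such domains are Lempert (Theorem \ref{T-5.2}).
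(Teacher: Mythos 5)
Your proposal is correct and follows essentially the same route as the paper: the lower bound via the Ahlfors--Schwarz comparison $\phi^*h \leq \tfrac{4}{B}\omega_{\mathcal{P}}$ for holomorphic discs, and the upper bound by applying Theorem \ref{SL} to the identity map with target $(M,\mathfrak{K}_M^2)$, using the Lempert hypothesis (via Remark \ref{R-5.2}) to guarantee that $\mathfrak{K}_M^2$ is a strongly pseudoconvex complex Finsler metric of constant holomorphic sectional curvature $-4$ and Theorem \ref{sectional} to supply the sectional curvature bound on the source. Your explicit identification of why the Lempert assumption is indispensable is exactly the point the paper's terser "similar to Theorem \ref{MT}" step relies on.
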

 \begin{proof}
 Let $\phi \in \text{Hol}(\mathbb{D}, M)$ such that $\phi(0)=z$, and $\phi_*(\xi)=v$. It follows from Schwarz lemma that
 $$ \phi^*h \leq \frac{4}{B}\omega_{\mathcal{P}} \quad \text{on} \quad \mathbb{D},$$
 where $\omega_{\mathcal{P}}= \frac{1}{(1-|z|^2)^2}dz \otimes d\overline{z}$. It follows that
\begin{equation*}
h(z,v)=(\phi^*h)(0,\xi) \leq \frac{4}{B}\omega_{\mathcal{P}}(0,\xi)=\frac{4}{B}|\xi|^2_{\mathbb{C}}.
\end{equation*}
Hence, $|\xi|^2_{\mathbb{C}} \geq \frac{B}{4}h(z,v)$. By the definition of  Kobayashi metric, we obtain
\begin{equation}\label{AA}
\mathfrak{K}^2_{M}(z,v) \geq \frac{B}{4}h(z,v).
\end{equation}
\par Similar to the proof of Theorem \ref{MT}, by Theorem \ref{sectional} and applying Theorem \ref{SL} to the case where a holomorphic map $f:M \rightarrow M$ is an identity map $id$, we have
\begin{equation}\label{B}
\mathfrak{K}^2_{M}(z,v)=id^*\mathfrak{K}^2_{M}(z,v) \leq \frac{A}{4}h(z,v).
\end{equation}
By the inequalities \eqref{AA}, \eqref{B}, there exists a positive constant $C$ depending only $A, B$ such that
$$ C^{-1} h(z,v) \leq \mathfrak{K}^2_{M}(z,v) \leq C h(z,v).$$
\end{proof}

Combining Theorems \ref{Lempert}, \ref{T-5.2} and Theorem \ref{T-7.1}, we prove the following corollary.
\begin{corollary}\label{C-7.1}
 Suppose that $\mathcal{D}$ is a bounded strongly convex domain with smooth  boundary in $\mathbb{C}^n$. And let $(\mathcal{D},h)$ be a complete K\"ahler manifold whose holomorphic sectional curvature $K_h$ satisfies $-A \leq K_h \leq -B$ for some positive constants $A$ and $B$. Then Kobayashi metric and Carath\'eodory metric satisfies
 $$ C^{-1} h(z,v) \leq\mathfrak{K}^2(z,v)=\mathfrak{C}^2(z,v) \leq Ch(z,v), \forall z \in M, \forall v \in T^{1,0}_zM.$$
 Here $C>0$ is only depending $A, B$, and independent of complex dimension $n$ of complex manifold $\mathcal{D}$.
 \end{corollary}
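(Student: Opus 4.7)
The plan is to combine the three cited results in a direct way, with the heart of the argument being Theorem \ref{T-7.1}; the corollary is essentially a specialization to bounded strongly convex domains with smooth boundary, together with Lempert's identification of the Kobayashi and Carathéodory metrics on such domains.

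First I would invoke Theorem \ref{T-5.2} to observe that $\mathcal{D}$, being a bounded strongly convex domain with smooth boundary in $\mathbb{C}^n$, is automatically a Lempert manifold in the sense of Definition \ref{D-5.1}. In particular its Kobayashi pseudometric $\mathfrak{K}_{\mathcal{D}}$ is a smooth strongly pseudoconvex complex Finsler metric on $T^{1,0}\mathcal{D}$, with constant holomorphic sectional curvature $-4$ along the totally geodesic complex curves produced in item $(2)$ of Definition \ref{D-5.1}, and its integrated Kobayashi distance is complete.

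Next I would apply Theorem \ref{T-7.1} directly to $(\mathcal{D}, h)$, using that $(\mathcal{D}, h)$ is a complete Kähler manifold with holomorphic sectional curvature pinched between $-A$ and $-B$. Theorem \ref{T-7.1} then supplies a constant $C>0$, depending only on $A$ and $B$ (and independent of $n$), such that
\begin{equation*}
C^{-1}\, h(z,v) \;\leq\; \mathfrak{K}_{\mathcal{D}}^{2}(z,v) \;\leq\; C\, h(z,v), \qquad \forall\, z \in \mathcal{D},\; \forall\, v \in T^{1,0}_z\mathcal{D}.
\end{equation*}
The lower bound comes from the Schwarz lemma on the disk (using $K_h \leq -B$) and the upper bound from applying the Finsler Schwarz lemma of Theorem \ref{SL} to the identity map, where the prerequisite lower bound on the sectional curvature of $h$ is furnished by Theorem \ref{sectional} together with the lower bound $-A \leq K_h$.

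Finally, since $\mathcal{D}$ is a bounded strongly convex domain with smooth boundary, Theorem \ref{Lempert} (together with Remark \ref{R-5.1}) gives $\mathfrak{K}_{\mathcal{D}} = \mathfrak{C}_{\mathcal{D}}$ as smooth strongly pseudoconvex complex Finsler metrics on $\mathcal{D}$. Substituting this equality into the two-sided estimate above yields the stated chain
\begin{equation*}
C^{-1}\, h(z,v) \;\leq\; \mathfrak{K}^{2}(z,v) \;=\; \mathfrak{C}^{2}(z,v) \;\leq\; C\, h(z,v),
\end{equation*}
with the same constant $C$. The dependence of $C$ on $A, B$ only, and not on $n$, is inherited from Theorem \ref{T-7.1}. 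The only real substance here is already packaged into Theorem \ref{T-7.1}; the potentially delicate point worth emphasizing is simply checking that the smoothness and strong pseudoconvexity hypotheses needed to make Theorem \ref{T-7.1} apply are actually met, and this is precisely what Theorem \ref{T-5.2} and Remark \ref{R-5.1} guarantee. Hence the argument is essentially a bookkeeping assembly of the three prior results, with no additional estimates required.
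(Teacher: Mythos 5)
Your proposal is correct and follows exactly the paper's own route: the paper likewise derives the corollary by combining Theorem \ref{T-5.2} (the domain is a Lempert manifold), Theorem \ref{T-7.1} (the two-sided estimate for the Kobayashi metric on Lempert manifolds), and Theorem \ref{Lempert} with Remark \ref{R-5.1} (the identification $\mathfrak{K}_{\mathcal{D}}=\mathfrak{C}_{\mathcal{D}}$). Your additional remarks about where the lower and upper bounds in Theorem \ref{T-7.1} come from are accurate but belong to the proof of that theorem rather than to the corollary itself.
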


Now we complete the proof of Theorem \ref{T-1.7}.
\begin{theorem}\label{T-7.2}
For a suitable choice of positive constant $C$, suppose that $(M,h)$ is a complete K\"ahler manifold whose holomorphic sectional curvature $K_h$ satisfies $-A \leq K_h \leq B$ for some positive constants $A$ and $B$. If Carath\'eodory metric $\mathfrak{C}^2$ on complex manifold $M$ is a smooth  strongly pseudoconvex complex Finsler metric. Then $H=C\mathfrak{C}^2+h$ and the K\"ahler metric $h$ are equivalent. What's more, $M$ is a complete Kobayashi-hyperbolic manifold.
\end{theorem}
\begin{proof}
By Wong's result \cite{WB}, we know that the holomorphic sectional curvature of the Carath\'eodory metric is less than $-4$. In the similar way as Theorem \ref{MT}, we know that the holomorphic sectional curvature of $H$ is bounded above by a negative constant $-\frac{4}{C}+B(0<C< \frac{4}{B})$. By assumptions and Theorem \ref{sectional},  applying Theorem \ref{SL} to the case where a holomorphic map $f: M \rightarrow M$ is an identity map $id$, we have
\begin{equation*}
H=id^*H\leq \frac{-A}{-\frac{4}{C}+B} h= C_3h, C_3>0.
\end{equation*}
Since $H=C\mathfrak{C}^2+h$ and $C>0$, we have
\begin{equation*}
H \geq h.
\end{equation*}
So $H=C\mathfrak{C}^2+h$ is equivalent to $h$. Because of the completeness of $(M,h)$, we get that $(M,H)$ is a complete complex Finsler manifold. By Theorem \ref{completeKobayashi}, $M$ is a complete Kobayashi-hyperbolic.
\end{proof}

{\bf Acknowledgement:} {
The author thanks the referees for carefully reading the manuscript and their valuable corrections/suggestions which improved the presentation of the paper. This work was supported by National Natural Science Foundation of China (Grant No. 12401101, No. 12461008, No. 12461013) and Natural Science Foundation of Jiangxi Province in China (Grant No. 20232ACB201005).}


\begin{thebibliography}{99}
\bibitem{abate} M. Abate, G. Patrizio,
Finsler Metrics-A Global Approach.
LNM, vol. 1591. Springer, Berlin 1994.

\bibitem{BFG} M. Beals, C. Fefferman, R. Grossman,
Strictly pseudoconvex domain in $\mathbb{C}^n$.
Bull. Amer. Math. Soc. {8} (2) (1983), 125-322.

\bibitem{Caratheodory} C. Carath\'eodory,
Die Methode der geod\"atischen \"Aquidistanten und das Problem von Lagrange.
 Acta Math (German). 47(3) (1926), 199-236.

\bibitem{CK} K. Chandler, P. M. Wong,
On the holomorphic sectional and bisectional curvatures in complex Finsler geometry.
Period. Math. Hungar. {48} (2004)(1-2), 93-123.

\bibitem{chenb} B. Chen, Y. B. Shen,
K\"ahler Finsler metrics are actually strongly K\"ahler. Chin. Ann. Math.
{30B}(2) (2009), 173-178.

\bibitem{CCK} C. K. Cheung,
Hermitian metrics of negative holomorphic sectional curvature on some hyperbolic manifolds.
Math. Z., {201} (1989), 105-119.

\bibitem{DGZ1} F. S. Deng, Q. A. Guan, L. Y. Zhang,
Some properties of squeezing functions on bounded domains.
Pacific J. Math. {257} (2012), 319-341.

\bibitem{DGZ2} F. S. Deng, Q. A. Guan, L. Y. Zhang,
Properties of squeezing functions and global transformations of bounded domains.
Trans. Amer. Math. Soc. {368} (2016), 2679-2696.

\bibitem{FJJ}J. J. Faran,
Hermitian Finsler metrics and the Kobayashi metric.
J. Differential Geom. {31} (1990), 601-625.


\bibitem{GW} R. E. Greene, H. H. Wu,
Function Theory on Manifolds which Possess a Pole. Lecture Notes in Mathematics.
Lecture Notes in Mathematics, 699. Springer, Berlin, 1979.

\bibitem{KZ} K. T. Kim, L. Y. Zhang,
On the uniform squeezing property of bounded convex domains in $\mathbb{C}^n$.
Pacific J. Math. {282}(2) (2016), 341-358.

\bibitem{Kobayashi0} S. Kobayashi,
Geoemtry of bounded domains.
Trans. Amer. Math. Soc. {93} (1959), 267-290.

\bibitem{kobayashi} S. Kobayashi,
Hyperbolic Complex Spaces.
Grundlehren der mathematischen Wissenschaften, 318. Springer-Verlag, Berlin, 1998.

\bibitem{kobayashi2}  S. Kobayashi,
Hyberbolic manifolds and holomorphic mappings.
An introduction. Second edition. World Scientific Publishing Co. Pte. Ltd., Hackensack, NJ, 2005.

\bibitem{lempert} L. Lempert,
La m\'etrique de Kobayashi et la repr\'esentation des domaines sur la boule.
 Bull. Soc. Math. (France) {109} (1981), 427-474.

\bibitem{lempert1} L. Lempert,
Intrinsic distances and holomorphic retracts.
 Complex analysis and applications'81 (Varna, 1981), Publ. House Bulgar. Acad. Sci., Sofia, (1984), 341-364.

\bibitem{LQK} Q. K. Lu,
On the lower bounds of the curvatures in a bounded domain.
Sci. China Math. {58}(1) (2015), 1-10.

\bibitem{LQZ} J. L. Li, C. H. Qiu, Q. X. Zhang,
A general Schwarz lemma on strongly convex complete complex Finsler manifolds.
J. Geom. Anal. 34(7) (2024), Paper No. 204, 24 pp.

\bibitem{LSY} K. F. Liu, X. F. Sun, S. T. Yau,
Canonical metrics on the moduli space of Riemann surfaces I.
J. Differential Geom. {68}(3) (2004), 571-637.

\bibitem{Nie} J. Nie,
Schwarz lemma on complex Finsler manifolds and its applications.
Ph.D. dissertation, Xiamen University (2022) (in Chinese).

\bibitem{NZ1} J. Nie, C. P. Zhong,
Schwarz lemma from complete K\"ahler manifolds into complex Finsler manifold.
Sci. China Math., {65}(8) (2022), 1661-1678.

\bibitem{NZ2} J. Nie, C. P. Zhong,
A Schwarz lemma for weakly K\"ahler-Finsler manifolds.
Ann. Mat. Pura Appl. {201}(4) (2022), 1935-1964.

\bibitem{shen} B. Shen, Y. B. Shen,
Schwarz lemma and Hartogs phenomenon in complex Finsler manifold.
Chin. Ann. Math. Ser. B {34}(3) (2013), 455-460.

\bibitem{Ohsawa} T. Ohsawa,
A remark on the completeness of the Bergman metric.
Proc. Japan Acad. Ser. A Math. Sci. {57}(6) (1981), 238-240.

\bibitem{Spiro} A. Spiro,
The Structure equations of a complex Finsler manifold.
Asian J. Math. {5}(2) (2001), 291-326.

\bibitem{TY} W. K. To, S. K. Yeung,
Finsler metrics and Kobayashi hyperbolicity of the moduli spaces of canonically polarized manifolds.
Ann. of Math. (2) {181}(2) (2015), 547-586.

\bibitem{wan}  X. Y. Wan,
Holomorphic sectional curvature of complex Finsler manifolds.
J. Geom. Anal. {29}(1) (2019), 194-216.

\bibitem{WB} B. Wong,
On the holomorphic curvature of some intrinsic metrics.
Proc. Amer. Math. Soc. {65}(1) (1977), 57-61.

\bibitem{WW} P. M. Wong, B. Y. Wu,
On the holomorphic sectional curvature of complex Finsler manifolds.
Hoston J. Math. 37(2) (2011), 415-433.

\bibitem{WHS}H. H. Wu,
Old and new invariant metrics on complex manifolds.
Several complex variables (Stockholm, 1987/1988), 640-682, Math. Notes, 38, Princeton Univ. Press, Princeton, NJ, 1993

\bibitem{WY} D. M. Wu, S. T. Yau,
Invariant metrics on negatively pinched complete K\"ahler manifolds.
J. Amer. Math. Soc. {33}(1) (2020), 103-133.



\bibitem{YSK}S. K. Yueng,
Geometry of domains with the uniform squeezing property.
Adv. Math. {221} (2009), 547-569.

\bibitem{ZLY} L. Y. Zhang,
Intrinsic derivative, curvature estimates and squeezing function.
Sci. China Math. {60}(1) (2017), 1149-1162.

\bibitem{ZFY} F. Y. Zheng,
Complex Differential Geometry,
AMS/IP Studies in Advanced Mathematics, 18. American Mathematical Society, Providence, RI; International Press, Boston, MA, 2000.

\bibitem{ZCP} C. P. Zhong,
On unitary invariant strongly pseudoconvex complex Finsler metrics.
 Differ. Geom. Appl., {40} (2015), 159-186.
\end{thebibliography}
\end{document}